\DeclareMathOperator{\esssup}{ess\,sup}
\DeclareMathOperator{\essinf}{ess\,inf}
\DeclareMathOperator{\supp}{supp}
\begin{document}

\theoremstyle{plain}
\newtheorem{theorem}{Theorem}[section]
\newtheorem{claim}{Claim}
\newtheorem{lemma}[theorem]{Lemma}
\newtheorem{proposition}[theorem]{Proposition}
\newtheorem{corollary}[theorem]{Corollary}

\theoremstyle{remark}
\newtheorem{example}[theorem]{Example} 	
\newtheorem{remark}{Remark}
\theoremstyle{definition}
\newtheorem{definition}{Definition}
\hfuzz5pt 


\newcommand{\gt}{\tilde{g}}
\newcommand{\R}{\mathbb{R}}
\newcommand{\Z}{\mathbb{Z}}
\newcommand{\N}{\mathbb{N}}
\newcommand{\Zt}{\mathbb{Z}^2}
\newcommand{\Zd}{\mathbb{Z}^d}
\newcommand{\Ztd}{0\mathbb{Z}^{2d}}
\newcommand{\Rt}{\R^2}
\newcommand{\Rtd}{\R^{2d}}
\newcommand{\Zp}{Z_p}
\newcommand{\al}{\alpha}
\newcommand{\be}{\beta}
\newcommand{\om}{\omega}
\newcommand{\Om}{\Omega}
\newcommand{\ga}{\gamma}
\newcommand{\bga}{\boldsymbol\ga}
\newcommand{\bg}{\mathbf{g}}
\newcommand{\de}{\delta}
\newcommand{\la}{\lambda}
\newcommand{\La}{\Lambda}
\newcommand{\ala}{\la^\circ}
\newcommand{\aLa}{\La^\circ}
\newcommand{\nat}{\natural}
\newcommand{\G}{\mathcal{G}}
\newcommand{\A}{\mathcal{A}}
\newcommand{\V}{\mathcal{V}}
\newcommand{\M}{\mathcal{M}}
\newcommand{\MV}{\mathcal{MV}}
\newcommand{\MP}{\mathcal{MP}}
\newcommand{\Sp}{\mathcal{S}}
\newcommand{\W}{\mathcal{W}}
\newcommand{\Hp}{\mathcal{H}}
\newcommand{\ka}{\kappa}
\newcommand{\cast}{\circledast}
\newcommand{\id}{\mbox{Id}}
\newcommand{\by}{\mathbf{Y}}
\newcommand{\bx}{\mathbf{X}}
\newcommand{\bz}{\mathbf{Z}}
\newcommand{\bn}{\mathbf{N}}
\newcommand{\bv}{\mathbf{v}}
\newcommand{\bu}{\mathbf{u}}
\newcommand{\bA}{\mathbf{A}}
\newcommand{\bC}{\mathbf{C}}
\newcommand{\bD}{\mathbf{D}}
\newcommand{\bh}{\mathbf{h}}
\newcommand{\bff}{\mathbf{f}}
\newcommand{\bH}{\mathbf{H}}
\newcommand{\bV}{\mathbf{V}}
\newcommand{\bX}{\mathbf{x}}
\newcommand{\byy}{\mathbf{y}}
\newcommand{\bZ}{\mathbf{z}}
\newcommand{\bt}{\mathbf{t}}
\newcommand{\bs}{\mathbf{\sigma}}
\newcommand{\bI}{\mathbf{I}}

\newcommand{\lo}{{\ell^1}}
\newcommand{\Lo}{{L^1}}
\newcommand{\Lt}{{L^2}}
\newcommand{\SO}{{S_0}}
\newcommand{\SOc}{{S_{0,c}}}
\newcommand{\Lp}{{L^p}}
\newcommand{\Los}{{L^1_s}}
\newcommand{\WCl}{{W(C_0,\ell^1)}}
\newcommand{\lt}{{\ell_2}}

\newcommand{\conv}[2]{{#1}\,\ast\,{#2}}
\newcommand{\twc}[2]{{#1}\,\nat\,{#2}}
\newcommand{\mconv}[2]{{#1}\,\cast\,{#2}}
\newcommand{\set}[2]{\Big\{ \, #1 \, \Big| \, #2 \, \Big\}}
\newcommand{\inner}[2]{\langle #1,#2\rangle}
\newcommand{\innerBig}[2]{\Big \langle #1,#2 \Big \rangle}
\newcommand{\dotp}[2]{ #1 \, \cdot \, #2}

\newcommand{\Zpd}{\Zp^d}
\newcommand{\I}{\mathcal{I}}
\newcommand{\J}{\mathcal{J}}
\newcommand{\Zq}{Z_q}
\newcommand{\Zqd}{Zq^d}
\newcommand{\Zak}{\mathcal{Z}_{a}}
\newcommand{\C}{\mathbb{C}}
\newcommand{\F}{\mathcal{F}}

\newcommand{\convL}[2]{{#1}\,\ast_L\,{#2}}
\newcommand{\abs}[1]{\lvert#1\rvert}
\newcommand{\absbig}[1]{\big\lvert#1\big\rvert}
\newcommand{\absBig}[1]{\Big\lvert#1\Big\rvert}
\newcommand{\scp}[1]{\langle#1\rangle}
\newcommand{\norm}[1]{\lVert#1\rVert}
\newcommand{\normmix}[1]{\lVert#1\rVert_{\ell^{2,1}}}
\newcommand{\normbig}[1]{\big\lVert#1\big\rVert}
\newcommand{\normBig}[1]{\Big\lVert#1\Big\rVert}


\title{Nonstationary Gabor Frames - Approximately Dual  Frames and Reconstruction Errors}

\author{Monika D\"orfler, Ewa Matusiak
\thanks{This work was supported by the WWTF project {\em Audiominer} (MA09-24)}
\thanks{The authors are with the Department of Mathematics,
NuHAG, University of Vienna, Nordbergstrasse 15, 1090 Wien, Austria (e-mail:
monika.doerfler@univie.ac.at, ewa.matusiak@univie.ac.at)}
}

\maketitle


\begin{abstract}
Nonstationary Gabor frames, recently introduced in adaptive signal analysis,  represent a natural generalization of classical Gabor frames by allowing for adaptivity of windows and lattice in either time or frequency. Due to the lack of a complete lattice structure, perfect reconstruction is in general not feasible from coefficients obtained from nonstationary Gabor frames. In this paper it is shown that for nonstationary Gabor frames that are related to some known frames for which dual frames can be computed, good approximate reconstruction can be achieved by resorting to approximately dual frames. In particular, we give constructive examples for so-called almost painless nonstationary frames, that is, frames that are closely related to nonstationary frames with compactly supported windows.   The  theoretical  results are illustrated by concrete  computational and numerical examples.\\

{\it Keywords: adaptive representations, nonorthogonal expansions, irregular Gabor frames, reconstruction, approximately dual frame }

\end{abstract}



\section{Introduction}\label{sec:intro}
Adapted and adaptive signal representation have received increasing interest over the past few years. As opposed to classical approaches such as the short-time Fourier transform (STFT) or wavelet transform, adaptive representations allow for a variation of parameters such as window width or sampling density over time, frequency or both. Changing parameters in the frequency domain leads, for example, to non-uniform filter banks while adapting window width and sampling density in time is reminiscent of the approach suggested in the construction of nonuniform lapped transforms.  
Transforms featuring simultaneous adaptivity in time and frequency are notoriously difficult to construct and implement, cp.~\cite{ro11,do11,jato07}; however they have shown to be useful in some applications , cf.~\cite{baliro11}.
On the other hand, fast and efficient implementations exist for representations with adaptivity in only time or frequency. One recent method to obtain this kind of representations is represented by \emph{nonstationary Gabor frames}, first suggested in \cite{ja06} and further developed in~\cite{badohojave11,dogrhove11,dogrhove12}. All the known implementations rely on compactness of the used analysis window in either time of frequency. This assumption allows for usage of tools developed for painless non-orthogonal expansions~\cite{dagrme86}. While a priori very convenient, the restriction to using compactly supported windows in the domain for which one wishes a flexible representation can be undesirable. As an example, we mention the construction of nonuniform filter banks via nonstationary Gabor frames, in which case this restriction forbids finite impulse response (FIR) filters; the latter are, however, imperative for real-time processing applications. 

In the current contribution, we therefore go beyond the results presented in the references above and consider nonstationary Gabor frames with fast decay but unbounded support. The existence of this kind of frames was shown in~\cite{doma12}. Here  we are concerned with methods for approximate reconstruction for these adaptive systems.

\section{Notation and Preliminaries}

Given a non-zero function $g\in\Lt(\R)$, a modulation, or frequency shift,
operator $M_{bl}$ is defined by $M_{bl}g(t):= e^{2\pi i bl t} g(t)$, and
time shift operator $T_{ak}$ by $T_{ak}g(t):= g(t-ak)$. A composition, $g_{k,l}=M_{bl}
T_{ak} g(t):= e^{2\pi i bl t} g(t-ak)$ is a time-frequency shift operator.

The set $\mathcal{G}(g,a,b) = \{g_{k,l}\,:\, k,l\in\Z \}$ is called a Gabor
system for any real, positive $a,b$. $\mathcal{G}(g,a,b)$ is a Gabor frame for
$\Lt(\R)$, if there exist frame bounds $0<A \leq B < \infty$ such that for every
$f\in\Lt(\R)$ we have
\begin{equation}\label{Eq:framecond}
A \norm{f}_2^2 \leq \sum_{k,l\in\Z} \abs{\inner{f}{g_{k,l}}}^2 \leq B \norm{f}_2^2\,.
\end{equation}

When working with irregular grids,  we assume that the sampling points form a
separated set: a set of sampling points $\{a_k\,:\, k\in\mathbb{Z}\}$ is called
$\delta$-separated, if  $\abs{a_k-a_m} > \delta$ for $a_k$, $a_m$, whenever
$k\neq m$. $\chi_{I}$ will denote the characteristic function of the interval
$I$.

A convenient class of window functions for time-frequency analysis on $\Lt(\R)$
is the Wiener space.
\begin{definition}
A function $g\in L^{\infty}(\R)$ belongs to the Wiener space
$W(L^{\infty},\ell^1)$ if
\begin{equation*}
\norm{g}_{W(L^{\infty},\ell^1)} := \sum_{k\in\Z}
\mbox{ess sup}_{t\in Q} \abs{g(t+k)} < \infty\,,\quad Q=[0,1]\,.
\end{equation*}
\end{definition}

For $g\in W(L^{\infty},\ell^1)$ and $\delta > 0$ we have \cite{gr01}
\begin{equation}\label{eq:wiener_norm}
\esssup_{t\in\R} \sum_{k\in\Z} \abs{g(t-\delta k)} \leq (1+\delta^{-1})
\norm{g}_{W(L^{\infty},\ell^1)}\,.
\end{equation}

For $f\in\Lt(\R)$ we use the following normalization of the  Fourier transform, which we denote by $\F$:
\begin{equation*}
\F f(\om) = \widehat{f}(\om) = \int_{\R} f(t) e^{-2\pi i \om t} \, dt\,. 
\end{equation*}
\section{Nonstationary Gabor Frames}

Nonstationary Gabor systems are a generalization of classical Gabor systems
of regular time-frequency shifts of a single window function. 
\begin{definition}
Let  $\mathbf{g}=\{g_k\in W(L^{\infty},\ell^1): \;k\in\Z\}$ be  a set of window
functions and let $\mathbf{b} = \{b_k: \;k\in\Z\}$  be a corresponding sequence
of frequency-shift parameters. Set $g_{k,l}  = M_{b_k l} g_k$. Then, the set 
\begin{equation}\label{eq:Gabor_system}
\mathcal{G}(\mathbf{g},\mathbf{b})=\{g_{k,l}:\; k,l\in\mathbb{Z}\}
\end{equation}
is called a {\it nonstationary Gabor (NSG) system}.
\end{definition}

In generalization of  regular Gabor frames, for which $g_k = T_{ak}g$, we will usually assume that the windows $g_k$ are localized around  points  $a_k$ in a  separated set of time-sampling points $\{ a_k\, : \, k\in\Z \}$. Further, we will always make the assumption that the frequency sampling parameters $b_k$ are positive numbers  contained in a closed interval, i.e.
 $b_k \in [b_L,b_U]\subset\mathbb{R}^+$ for all $k\in\Z$.

To every collection \eqref{eq:Gabor_system} we associate the analysis operator
$C_g$ given by $(C_g f)_{k,l} = \inner{f}{g_{k,l}}$, and synthesis operator
$U_g$, where $U_g c = \sum_{k,l\in\Z} c_{k,l} g_{k,l}$ and $c\in\ell^2$. For two
Gabor systems $\mathcal{G}(\mathbf{g},\mathbf{b})$ and
$\mathcal{G}(\boldsymbol\ga,\mathbf{b})$ the composition $S_{g,\ga}=U_\ga C_g$, 
\begin{equation}\label{Eq:FOnon}
S_{g,\ga}f = \sum_{k,l\in\Z} \inner{f}{M_{lb_k}g_k} M_{lb_k}\ga_k\,,
\end{equation}
admits a Walnut representation for all $f\in\Lt(\R)$,~\cite{doma12}:
\begin{equation}\label{eq:walnut}
S_{g,\ga}f(t) = \sum_{k,l\in\Z} b_k^{-1} \overline{g_k(t-lb_k^{-1})} \ga_k(t)
f(t-lb_k^{-1})\,.
\end{equation}
 We will frequently use the following correlation functions of a pair of Gabor systems:
\begin{equation}
	G^{g,\gamma}_l(t) = \sum_{k\in\Z}
b_k^{-1} \abs{g_k(t-lb_k^{-1})}\abs{\ga_k(t)}, \mbox{ for } l\in\Z .
\end{equation}
Note that this definition is asymmetric with respect to $\bf{g}$ and $\boldsymbol\gamma$. Using this notation, we obtain the following bounds for the frame operator \eqref{Eq:FOnon}:
\begin{equation}\label{eq:norm_1}
\norm{S_{g,\ga}}^2 \leq \esssup \sum_{l\in\Z}
	G^{g,\gamma}_l \,\cdot \, \esssup \sum_{l\in\Z}
	G^{\gamma,g}_l \,.
\end{equation}
By inspection of \eqref{eq:walnut}, we note that the summands corresponding to $l\neq 0$ may be seen as the off-diagonal entries of the frame operator. We thus isolate the diagonal part 
\begin{equation}\label{eq:diag}
	G^{g,\gamma}_0 (t )  = \sum_{k\in\Z} b_k^{-1} |g_k(t)|
|\ga_k  (t)|
\end{equation}
and denote the off-diagonal entries as follows:
\begin{equation}\label{eq:R}
R_{g,\ga} = \esssup \sum_{l\in\Z\setminus \{0\}}
\sum_{k\in\Z} b_k^{-1} \abs{g_k(\cdot-lb_k^{-1})}\abs{\ga_k(\cdot)}\,.
\end{equation}
Note that, if $\bg = \bga$, then the diagonal part of the frame operator $S_{g,g}$ is equal to  $G^{g,g}_0$.\\
Using this notation, we  obtain the  following additional bound:
\begin{equation}\label{eq:upper_frame_bound}
\inner{S_{g,\ga} f}{f} \, \norm{f}_2^{-2} \leq  \esssup
	G^{g,\gamma}_0 + \sqrt{ R_{g,\ga} \cdot R_{\ga,g} }\,,
\end{equation}

Bessel sequences are of particular importance in the theory of frames and Riesz bases,\cite{chsh94,yo01}. In the regular Gabor case, where $g_k(t)=g(t-ak)$ for some $a>0$, it is sufficient to assume $g\in W(L^{\infty},\ell^1)$ to obtain a Bessel sequence. We next provide a generalization of this property to NSG frames.

\begin{proposition}\label{prop:Bessel condition}
Let $\mathcal{G}(\mathbf{g},\mathbf{b})$ be a NSG system. If
$g_k\in W(L^{\infty},\ell^1)$ for all $k\in\Z$ with $\sup_{k\in\Z} 
\norm{g_k}_{W(L^{\infty},\ell^1)}$ bounded, and $\sum_{k\in\Z}
\abs{g_k(t)}\leq B$ almost everywhere for some $B<\infty$, then the sequence
$g_{k,l}$ is a Bessel sequence.
\end{proposition}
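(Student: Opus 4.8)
The plan is to bound the Bessel sum $\sum_{k,l\in\Z}\abs{\inner{f}{g_{k,l}}}^2$ directly, exploiting that for each fixed $k$ the inner system $\{M_{b_k l}g_k : l\in\Z\}$ is a classical regular Gabor system in the frequency variable alone, so its Bessel bound is controlled by the single window $g_k$. Concretely, fixing $k$, the sum $\sum_{l\in\Z}\abs{\inner{f}{M_{b_k l}g_k}}^2$ is a Gabor-type Bessel sum for the one-parameter modulation lattice $b_k\Z$; using a standard Walnut/periodization estimate (or the well-known fact that a window in $W(L^\infty,\ell^1)$ generates a Bessel sequence of modulates with bound depending only on $b_k^{-1}$ and $\norm{g_k}_{W(L^\infty,\ell^1)}$), one gets
\begin{equation*}
\sum_{l\in\Z}\abs{\inner{f}{M_{b_k l}g_k}}^2 \leq b_k^{-1}\esssup_{t\in\R}\sum_{m\in\Z}\abs{g_k(t-m b_k^{-1})}^2\;\cdot\;\int_\R \abs{f(t)}^2 W_k(t)\,dt,
\end{equation*}
or more simply a pointwise bound of the form $\sum_{l}\abs{\inner{f}{M_{b_k l}g_k}}^2 \leq \int_\R \abs{f(t)}^2 H_k(t)\,dt$ where $H_k(t) = b_k^{-1}\sum_{m\in\Z}\abs{g_k(t)}\,\abs{g_k(t-mb_k^{-1})}$. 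The key point is that this can be done for each $k$ without any compact support assumption, only using membership in the Wiener space together with \eqref{eq:wiener_norm}.

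Next I would sum over $k$. The factor $b_k^{-1}$ is bounded above by $b_L^{-1}$ since $b_k\in[b_L,b_U]$. The periodization $\sum_{m\in\Z}\abs{g_k(t-mb_k^{-1})}$ is, by \eqref{eq:wiener_norm}, bounded uniformly in $t$ and $k$ by $(1+b_U)\sup_k\norm{g_k}_{W(L^\infty,\ell^1)}$, which is finite by hypothesis. This reduces the remaining $k$-summand to roughly $b_L^{-1}(1+b_U)\bigl(\sup_k\norm{g_j}_{W(L^\infty,\ell^1)}\bigr)\cdot\abs{g_k(t)}$ inside the integral, and then the second hypothesis $\sum_{k\in\Z}\abs{g_k(t)}\leq B$ a.e.\ collapses the sum over $k$ to the constant $B$. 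Putting the pieces together yields
\begin{equation*}
\sum_{k,l\in\Z}\abs{\inner{f}{g_{k,l}}}^2 \leq b_L^{-1}(1+b_U)\,B\,\Bigl(\sup_{k\in\Z}\norm{g_k}_{W(L^\infty,\ell^1)}\Bigr)\,\norm{f}_2^2,
\end{equation*}
which is the desired Bessel bound.

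An alternative, and perhaps cleaner, route is to observe that this proposition is really the special case $\bg=\bga$ of the upper frame bound already recorded in \eqref{eq:upper_frame_bound}, or of \eqref{eq:norm_1}: one just needs to check that the hypotheses force $\esssup G^{g,g}_0 < \infty$ and $R_{g,g} < \infty$. Indeed $G^{g,g}_0(t) = \sum_k b_k^{-1}\abs{g_k(t)}^2 \leq b_L^{-1}(\sup_k\esssup\abs{g_k})\sum_k\abs{g_k(t)} \leq b_L^{-1}(\sup_k\norm{g_k}_{W(L^\infty,\ell^1)})B$, and $R_{g,g} = \esssup\sum_{l\neq 0}\sum_k b_k^{-1}\abs{g_k(\cdot-lb_k^{-1})}\abs{g_k(\cdot)}$ is handled by pulling out $\abs{g_k(t)}$, bounding $\sum_{l\neq 0}\abs{g_k(t-lb_k^{-1})}\leq\esssup_s\sum_{l\in\Z}\abs{g_k(s-lb_k^{-1})}\leq(1+b_U)\sup_k\norm{g_k}_{W(L^\infty,\ell^1)}$ via \eqref{eq:wiener_norm}, then using $\sum_k\abs{g_k(t)}\leq B$ again. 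Then $\inner{S_{g,g}f}{f} = \sum_{k,l}\abs{\inner{f}{g_{k,l}}}^2$ and \eqref{eq:upper_frame_bound} gives the bound.

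The main obstacle — really the only nontrivial point — is justifying the per-$k$ inner estimate with the correct, $k$-uniform dependence on $\norm{g_k}_{W(L^\infty,\ell^1)}$ and on $b_k$, since one must avoid any hidden constant that blows up as $b_k\to b_L$ or as $k$ varies; the interval condition $b_k\in[b_L,b_U]$ and the uniform Wiener bound are exactly what make this work, and \eqref{eq:wiener_norm} supplies the needed uniform periodization estimate. A secondary technical care is interchanging the order of summation over $k$ and $l$ and the integration, which is legitimate by nonnegativity (Tonelli), so no delicate convergence argument is needed.
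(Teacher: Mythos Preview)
Your proposal is correct and, in fact, your second alternative is exactly the paper's proof: apply \eqref{eq:norm_1} with $\bga=\bg$ to get $\inner{S_{g,g}f}{f}\le\norm{f}_2^2\,\esssup\sum_{l}G^{g,g}_l$, rewrite the double sum as $\sum_k b_k^{-1}|g_k(t)|\sum_l|g_k(t-lb_k^{-1})|$, bound the inner $l$-sum via \eqref{eq:wiener_norm} with $\delta=b_k^{-1}$, and collapse the remaining $k$-sum using $\sum_k|g_k(t)|\le B$. Your first route is just the same Walnut estimate unpacked per $k$ before summing, so there is no substantive difference in strategy.
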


\begin{proof}
Let $f\in \Lt(\R)$. Then by the assumption on the windows $g_k$ and estimate \eqref{eq:norm_1}
\begin{align*}
\sum_{k,l\in\Z} \abs{\inner{f}{g_{k,l}}}^2 &= \inner{S_{g,g}f}{f} \leq
\norm{f}_2^2\cdot  \esssup \sum_{l\in\Z}
	G^{g,g}_l\\
& = \norm{f}_2^2\cdot \esssup \sum_{k\in\Z} b_k^{-1}|g_k(\cdot)|\sum_{l\in\Z} |g_k (\cdot-l b_k^{-1} )|\\
& \leq \norm{f}_2^2\cdot \esssup \sum_{k\in\Z} b_k^{-1}|g_k(\cdot)|(1+b_k^{-1})\|g_k\|_{W(L^{\infty},\ell^1)}\\
 &\leq \norm{f}_2^2\cdot B\cdot  \sup_{k\in\Z} [(1+b_k)\|g_k\|_{W(L^{\infty},\ell^1)}].
\end{align*}
\end{proof}

Given a frame, it is well known that there exists at least one dual
frame $\mathcal{G}(\boldsymbol\gamma,\mathbf{b})$ such that 
\begin{equation}\label{eq:recon}
f = \sum_{k,l\in\Z} \inner{f}{\ga_{k,l}}g_{k,l}\,,\quad \mbox{for all  }f\in\Lt(\R)\,.
\end{equation}
The canonical dual frame is given by $\ga_{k,l} =S^{-1} g_{k,l}$. In the regular Gabor case, the dual frames are again Gabor frames, i.e., they consist of time-frequency shifted versions of one dual window. This is due to the fact, that the frame operator $S$ commutes with  time-frequency shifts, hence
$\ga_{k,l} =S^{-1} g_{k,l} =  S^{-1} M_{b l}T_{ak}g =  M_{b l}T_{ak}S^{-1}g = M_{b l}T_{ak}\ga$. In general, we cannot expect, that the dual frame of a NSG frame is again a NSG frame.\\
However, even in
the case of regular Gabor frames, it is often difficult to calculate a dual frame explicitly. For that reason, alternative possibilities to obtain perfect or approximate reconstruction have been proposed,~\cite{chla10,bafehakr06-1}. The following lemma
quantifies the reconstruction error using 
general pairs of Bessel sequences. 

\begin{lemma}\label{lem:duality_condition}
Let $\mathcal{G}(\mathbf{g},\mathbf{b})$ and
$\mathcal{G}(\boldsymbol\gamma,\mathbf{b})$ be two Bessel sequences. Then 
\begin{equation}\label{eq:duality_estimate}
\norm{I-S_{g,\ga}} \leq \normBig{1 -
	 \sum_{k\in\Z}
b_k^{-1} \overline{g_k}\ga_k}_{\infty} +  \sqrt{ R_{g,\ga} \cdot R_{\ga,g} }\,.
\end{equation}
\end{lemma}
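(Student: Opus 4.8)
The plan is to estimate the operator norm $\norm{I - S_{g,\ga}}$ by splitting the Walnut representation \eqref{eq:walnut} of $S_{g,\ga}$ into its diagonal part ($l=0$) and its off-diagonal part ($l\neq 0$), and bounding each separately. Since both systems are Bessel sequences, $S_{g,\ga}=U_\ga C_g$ is a bounded operator on $\Lt(\R)$, so $\norm{I-S_{g,\ga}} = \sup_{\norm{f}_2=\norm{h}_2=1} \abs{\inner{(I-S_{g,\ga})f}{h}}$, and it suffices to control this sesquilinear form. Writing $I - S_{g,\ga} = (I - D_{g,\ga}) - (S_{g,\ga} - D_{g,\ga})$, where $D_{g,\ga}$ is the diagonal multiplication operator $D_{g,\ga}f(t) = \big(\sum_{k\in\Z} b_k^{-1}\overline{g_k(t)}\ga_k(t)\big) f(t)$, we get $\norm{I-S_{g,\ga}} \leq \norm{I - D_{g,\ga}} + \norm{S_{g,\ga} - D_{g,\ga}}$ by the triangle inequality.

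For the first term, $I - D_{g,\ga}$ is itself a multiplication operator with symbol $1 - \sum_{k\in\Z} b_k^{-1}\overline{g_k}\ga_k$, so its operator norm on $\Lt(\R)$ is exactly the $L^\infty$-norm of that symbol, giving the first summand $\normBig{1 - \sum_{k\in\Z} b_k^{-1}\overline{g_k}\ga_k}_{\infty}$ in \eqref{eq:duality_estimate}. (One should note in passing that under the Bessel hypotheses the symbol is indeed essentially bounded, so this makes sense.) For the second term, $S_{g,\ga} - D_{g,\ga}$ is precisely the off-diagonal part of the Walnut representation, namely the operator whose action is $f(t) \mapsto \sum_{l\neq 0}\sum_{k\in\Z} b_k^{-1}\overline{g_k(t-lb_k^{-1})}\ga_k(t) f(t-lb_k^{-1})$. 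This is exactly the kind of operator whose norm is controlled by the mixed bound \eqref{eq:norm_1}: applying \eqref{eq:norm_1} with the $l=0$ term removed yields $\norm{S_{g,\ga}-D_{g,\ga}}^2 \leq R_{g,\ga}\cdot R_{\ga,g}$, by the very definitions \eqref{eq:R} of $R_{g,\ga}$ and $R_{\ga,g}$. Taking square roots gives the second summand, and combining the two estimates finishes the proof.

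The step requiring the most care is justifying that the off-diagonal estimate \eqref{eq:norm_1} applies to the truncated operator $S_{g,\ga}-D_{g,\ga}$: one has to either re-run the Schur-test / Cauchy--Schwarz argument that produced \eqref{eq:norm_1} but now summing only over $l\neq 0$, or observe directly that the derivation of \eqref{eq:norm_1} is insensitive to which subset of the index $l$ one sums over, so that restricting to $\Z\setminus\{0\}$ simply replaces $\sum_{l\in\Z} G^{g,\ga}_l$ by $\sum_{l\in\Z\setminus\{0\}} G^{g,\ga}_l = R_{g,\ga}$ (and similarly with $g$ and $\ga$ swapped). A minor additional point is that the interchange of the $\sum_{l}$ and $\sum_k$ and the pointwise manipulation of the Walnut kernel are legitimate, which follows from the absolute convergence guaranteed by the Bessel/Wiener-space assumptions, exactly as in the derivation of \eqref{eq:walnut} in \cite{doma12}. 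Everything else is the triangle inequality and the identification of a multiplication operator's norm with the sup-norm of its symbol, both routine.
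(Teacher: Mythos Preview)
Your proof is correct and follows the same strategy as the paper: split the Walnut representation into its $l=0$ (diagonal multiplication) and $l\neq 0$ (off-diagonal) parts, bound the former by the $L^\infty$-norm of the symbol and the latter by the Cauchy--Schwarz/Schur-type estimate yielding $\sqrt{R_{g,\ga}R_{\ga,g}}$. The only cosmetic differences are that the paper writes out the Cauchy--Schwarz computation for the off-diagonal term explicitly (your appeal to \eqref{eq:norm_1} with the sum restricted to $l\neq 0$ is exactly that argument), and that the paper estimates the quadratic form $\abs{\inner{(I-S_{g,\ga})f}{f}}$ whereas you work directly with the operator-norm decomposition via the triangle inequality---your version is in fact slightly cleaner on this point.
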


\begin{proof}
Starting from  the Walnut representation of $S_{g,\ga}$, we estimate
 using Cauchy-Schwartz inequality for sums and integrals and, since all summands have absolute value, Fubini's theorem to justify  changing  the order of summation and integral:

\begin{align}\label{eq:lemma}
|\inner{f-S_{g,\ga}f}{f}| & = \Big |\inner{f - \sum_{k\in\Z} b_k^{-1} \overline{g_k}
\ga_k f}{f} - \innerBig{\sum_{l\in\Z\setminus \{0\}} \sum_{k\in\Z} b_k^{-1}
\ga_k(\cdot)\overline{g_k(\cdot-lb_k^{-1}}) f(\cdot - lb_k^{-1})}{f}\Big|\nonumber \\
& \leq \normBig{1-\sum_{k\in\Z}
b_k^{-1} \overline{g_k}\ga_k
}_{\infty}
\norm{f}_2^2 +\sqrt{ R_{g,\ga} \cdot R_{\ga,g} } \,\norm{f}_2^2, 
\end{align}
since
\begin{align}\label{eq:estimate_R}
&\Big |\innerBig{\sum_{\substack{k\in\Z\\l\in\Z\setminus \{0\}} } b_k^{-1}
\ga_k(\cdot)\overline{g_k(\cdot-lb_k^{-1}}) f(\cdot - lb_k^{-1})}{f}\Big |\\\nonumber
&\leq \sum_{\substack{k\in\Z\\l\in\Z\setminus \{0\}} } b_k^{-1} \int_{\R}\abs{\ga_k(t)}
\abs{g_k(t-lb_k^{-1})} \abs{f(t-lb_k^{-1})}
\abs{f(t)} \, dt \nonumber \\
& \leq\sum_{\substack{k\in\Z\\l\in\Z\setminus \{0\}} } b_k^{-1} \left [
\int_{\R} \abs{g_k(t-lb_k^{-1})} \abs{\ga_k(t)}\abs{f(t-lb_k^{-1})}^2
\, dt \right ]^{1/2} \left [ \int_{\R} \abs{g_k(t-lb_k^{-1})}
\abs{\ga_k(t)}\abs{f(t)}^2\, dt \right]^{1/2} \nonumber \\\nonumber
& \leq \left [\int_{\R} \abs{f(t)}^2 \sum_{\substack{k\in\Z\\l\in\Z\setminus \{0\}}}  b_k^{-1}\abs{g_k(t)}
\abs{\ga_k(t-lb_k^{-1})} \, dt \right ]^{1/2} \left [ \int_{\R} \abs{f(t)}^2
\sum_{\substack{k\in\Z\\l\in\Z\setminus \{0\}} }  b_k^{-1}\abs{\ga_k(t)} \abs{g_k(t-lb_k^{-1})} \, dt \right
]^{1/2}.\end{align}
\end{proof}
A special class of NSG systems are
collections of compactly supported windows. They were first addressed in
\cite{badohojave11}. The collection $\mathcal{G}(\mathbf{g},\mathbf{b})$ with
windows $g_k$ being compactly supported with $|\supp g_k| \leq\frac{1}{b_k}$ for all $k$ is a frame for $\Lt(\R)$ if there exist
constants $A>0$ and $B<\infty$ such that
\begin{equation}\label{eq:painless_frame}
A \leq G_0^{g,g} (t) \leq B\, \mbox{ a.e.}\,.
\end{equation}
In this situation,  $\mathcal{G}(\mathbf{g},\mathbf{b})$ is called {\it painless
NSG frame}. The canonical dual atoms are given by  $\ga_{k,l} = M_{lb_k}
 (G_0^{g,g} g)^{-1} g_k$.
Note again that,  in general, we may have $\ga_{k,l} = S^{-1} (M_{lb_k} g_k)\neq   M_{lb_k} (S^{-1}g_k)$.  If $b_k = b$ for all $k$, then the frame operator commutes with the frequency-shifts and the dual frame is an NSG frame.
 
 The existence of  NSG frames with  not necessarily compactly
supported windows was established in \cite{doma12}. For these frames,  finding
canonical dual frames requires the inversion of the  frame operator. This computation is  expensive since the operator has considerably less structure than the frame operator in the classical, regular Gabor frame case, for which fast algorithms now exist,~\cite{st01-1,jaso07,so12}. To circumvent the
problem, we suggest the use of windows other than canonical duals to obtain sufficiently good approximate reconstruction.


\section{Approximately dual atoms}

The notion of approximately dual pairs was discussed in \cite{chla10}. For NSG Bessel sequences we adapt their definition  as follows.

\begin{definition}\label{Def:AppDual}
Two Bessel sequences $\mathcal{G}(\mathbf{g},\mathbf{b})$ and
$\mathcal{G}(\boldsymbol\gamma,\mathbf{b})$ are said to be
approximately dual frames if $\norm{I-S_{g,\ga}} <1$ or $\norm{I-S_{\ga,g}} <1$.
\end{definition}
Note that the two conditions given in the definition are equivalent since
\[\norm{I-S_{g,\ga}}  = \norm{I-C_g U_\ga }  = \norm{I-U_\ga^* C_g^*}= \norm{I-C_\ga U_g}=\norm{I-S_{\ga,g}}.\]
In Definition~\ref{Def:AppDual} it is implicitly stated that, if  two Bessel sequences are approximately dual frames, then each of them is a frame. This result was proved in~\cite{chla10} for general frames and it will be useful to reformulate the conditions for the existence of NSG frame, given in \cite{doma12}, in the context of approximately dual frames.

\begin{proposition}\label{prop:approx_dual}
Let $\mathcal{G}(\mathbf{g},\mathbf{b})$ be a Bessel sequence with Bessel bound $B$ and 
$0<A_1\leq\sum_{k\in\Z} \abs{g_k(t)}^2 \leq A_2 <\infty$ a.e. for some positive
constants $A_1, A_2$. 
\begin{itemize}
\item[i)] The multiplication operator $	G^{g,g}_0$ is invertible a.e. and, for $\ga_k = (G^{g,g}_0)^{-1} g_k$,
\begin{equation}\label{eq:error}
\norm{I-S_{g,\ga}} \leq \frac{R_{g,g}}{\essinf G^{g,g}_0}\,.
\end{equation}
\item[ii)] If 
\begin{equation}\label{Eq:appdual}
	R_{g,g}< \essinf G^{g,g}_0\,,  
\end{equation}
 then  $\mathcal{G}(\mathbf{g},\mathbf{b})$
and $\mathcal{G}(\boldsymbol\gamma,\mathbf{b})$ are approximately dual frames for $\Lt(\R)$. 
\item[iii)] Assume, additionally, for some  $\delta$-separated set of time-sampling points $\{ a_k\, : \, k\in \Z \}$ and constants $0<p_U,C_L,C_U<\infty$ such that for 
$p_k\in ]2,p_U]\subset\mathbb{R}$, $C_k \in [C_L, C_U]$ we have
\begin{equation}\label{eq:g_k}
\abs{g_k(t)} \leq C_k(1+\abs{t-a_k})^{-p_k}  \mbox{ for all  }k\in\Z\,.
\end{equation}
Then
there exists a sequence $\{b_k^0\}_{k\in\Z}$, such that for all sequence $b_k \leq
b_k^0$, $k\in\Z$,  \eqref{Eq:appdual} holds.
\end{itemize}
\end{proposition}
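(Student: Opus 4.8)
The plan is to obtain parts (i) and (ii) quickly from Lemma~\ref{lem:duality_condition} and to spend the real effort on (iii), which reduces to a decay estimate of Riemann-sum type.

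For (i) I would first note that, since $b_k\in[b_L,b_U]$, the hypothesis $0<A_1\le\sum_{k\in\Z}|g_k(t)|^2\le A_2$ forces $b_U^{-1}A_1\le G^{g,g}_0(t)\le b_L^{-1}A_2$ a.e., so $G^{g,g}_0$ is a.e. bounded away from $0$ and $\infty$, hence boundedly invertible as a multiplication operator; write $h:=(G^{g,g}_0)^{-1}$, a bounded positive function with $\norm{h}_\infty=(\essinf G^{g,g}_0)^{-1}$. The structural point is that the \emph{same} multiplier $h$ produces every dual atom, $\ga_k=h\,g_k$, so that $M_{b_k l}\ga_k=h\cdot M_{b_k l}g_k$ and $\inner{f}{M_{b_k l}\ga_k}=\inner{hf}{M_{b_k l}g_k}$; consequently $\sum_{k,l\in\Z}|\inner{f}{\ga_{k,l}}|^2=\inner{S_{g,g}(hf)}{hf}\le B\,\norm{h}_\infty^2\norm{f}_2^2$, so $\mathcal{G}(\boldsymbol\gamma,\mathbf{b})$ is Bessel and Lemma~\ref{lem:duality_condition} applies. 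In \eqref{eq:duality_estimate} the diagonal term vanishes identically, because $\sum_{k}b_k^{-1}\overline{g_k}\ga_k=h\sum_{k}b_k^{-1}|g_k|^2=h\,G^{g,g}_0=1$, and for the off-diagonal term $|\ga_k|=h|g_k|$ together with $h\le\norm{h}_\infty$ yields $R_{g,\ga}\le\norm{h}_\infty R_{g,g}$ and $R_{\ga,g}\le\norm{h}_\infty R_{g,g}$; hence $\norm{I-S_{g,\ga}}\le\sqrt{R_{g,\ga}R_{\ga,g}}\le\norm{h}_\infty R_{g,g}=R_{g,g}/\essinf G^{g,g}_0$, which is \eqref{eq:error}. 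Part (ii) is then immediate: \eqref{Eq:appdual} makes the right-hand side of \eqref{eq:error} strictly less than $1$, so $\mathcal{G}(\mathbf{g},\mathbf{b})$ and $\mathcal{G}(\boldsymbol\gamma,\mathbf{b})$ are approximately dual in the sense of Definition~\ref{Def:AppDual}, and, as recalled just after that definition, each is then a frame.

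For (iii) the target is to produce $\{b_k^0\}$ such that $b_k\le b_k^0$ for all $k$ implies $R_{g,g}<\essinf G^{g,g}_0$. I would restrict to $b_k^0\le1$, which is harmless and makes the right-hand side easy: $b_k^{-1}\ge1$ gives $\essinf G^{g,g}_0\ge\essinf\sum_k|g_k|^2\ge A_1$, a bound independent of the $b_k$. The work is then to make $R_{g,g}$ small. Fixing $t$ and writing $s_k=t-a_k$, I would insert \eqref{eq:g_k} and use the elementary inequality $(1+|s_k-lb_k^{-1}|)(1+|s_k|)\ge1+|l|b_k^{-1}$ (triangle inequality) — the crux of the argument — to distribute the decay as
\[
|g_k(t-lb_k^{-1})|\,|g_k(t)| \le C_k^2\,(1+|l|b_k^{-1})^{-p_k/2}\,(1+|s_k|)^{-p_k/2}.
\]
Since $p_k/2>1$, summation over $l\neq0$ contributes a factor $\le2\zeta(p_k/2)\,b_k^{p_k/2}$, and discarding $(1+|s_k|)^{-p_k/2}\le1$ gives
\[
R_{g,g} \le 2\sum_{k\in\Z}C_k^2\,\zeta(p_k/2)\,b_k^{p_k/2-1} \le 2\sum_{k\in\Z}C_k^2\,\zeta(p_k/2)\,(b_k^0)^{p_k/2-1},
\]
the last step using $p_k/2-1>0$. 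Because for each fixed $k$ the quantity $C_k^2\zeta(p_k/2)\,b^{\,p_k/2-1}\to0$ as $b\to0^+$, one can choose $b_k^0\in(0,1]$ so small that $C_k^2\zeta(p_k/2)(b_k^0)^{p_k/2-1}<\tfrac{1}{8}A_1\,2^{-|k|}$; then for any $b_k\le b_k^0$ one gets $R_{g,g}\le\tfrac{1}{4}A_1\sum_{k\in\Z}2^{-|k|}=\tfrac{3}{4}A_1<A_1\le\essinf G^{g,g}_0$, i.e.\ \eqref{Eq:appdual}.

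The point requiring care is that the decay rates are not uniformly bounded away from $2$: since $p_k\in\,]2,p_U]$ may approach $2$, neither $\zeta(p_k/2)$ nor (if one prefers to retain the localization factor $(1+|s_k|)^{-p_k/2}$ and sum over $k$ using $\delta$-separation) the series $\sum_k(1+|s_k|)^{-p_k/2}$ is bounded in $k$; this is exactly why the conclusion delivers only a \emph{sequence} $\{b_k^0\}$ — a slowly decaying window must be paired with a correspondingly small frequency step, and one should not expect a uniform threshold. Note also that $A_1,A_2$, the $C_k$, the $p_k$, the $a_k$ and $\delta$ are all independent of the $b_k$, so shrinking the frequency steps never weakens the bound $\essinf G^{g,g}_0\ge A_1$; and the decay \eqref{eq:g_k} together with $\delta$-separation is what renders the standing Bessel hypothesis on $\mathcal{G}(\mathbf{g},\mathbf{b})$ automatic here, via Proposition~\ref{prop:Bessel condition}.
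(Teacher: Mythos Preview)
Your argument for (i) and (ii) follows the paper's approach closely: both proofs observe that $G^{g,g}_0$ is bounded above and below (hence invertible) via $b_k\in[b_L,b_U]$ and the hypothesis on $\sum_k|g_k|^2$, both check that $\mathcal{G}(\boldsymbol\gamma,\mathbf{b})$ inherits the Bessel property with constant $(\essinf G^{g,g}_0)^{-2}B$, and both plug $\ga_k=(G^{g,g}_0)^{-1}g_k$ into Lemma~\ref{lem:duality_condition} so the diagonal term vanishes and the off-diagonal term is bounded by $(\essinf G^{g,g}_0)^{-1}R_{g,g}$. Your route through $R_{g,\ga}\le\norm{h}_\infty R_{g,g}$ and $R_{\ga,g}\le\norm{h}_\infty R_{g,g}$ is a minor cosmetic variant of the paper's direct substitution into \eqref{eq:lemma}--\eqref{eq:estimate_R}.

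For (iii), however, your treatment is genuinely more informative than the paper's, which simply invokes \cite[Theorem~3.4]{doma12} to assert that under \eqref{eq:g_k} one can make $\essinf G^{g,g}_0-R_{g,g}>0$ by shrinking the $b_k$. You instead carry out the estimate explicitly: the key step is the triangle-inequality splitting $(1+|s_k-lb_k^{-1}|)(1+|s_k|)\ge 1+|l|b_k^{-1}$, which lets you extract a factor $b_k^{p_k/2}$ from the $l$-sum and then absorb everything into a summable series $\sum_k 2^{-|k|}$ by choosing each $b_k^0$ small enough. This buys a self-contained proof and makes transparent the mechanism the paper leaves to the citation; it also makes explicit (as you note) why no \emph{uniform} threshold is available when the $p_k$ are not bounded away from $2$. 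One small tension worth flagging: your construction may force $b_k^0\to0$ as $|k|\to\infty$, which sits awkwardly with the paper's standing assumption $b_k\in[b_L,b_U]$; the paper's formulation of (iii) has the same tension, so this is not a flaw in your argument per se, but it is worth being aware of.
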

\begin{remark}
If \eqref{Eq:appdual} holds,
$\mathcal{G}(\boldsymbol\gamma,\mathbf{b})$ is called a single preconditioning dual
system for $\mathcal{G}(\mathbf{g},\mathbf{b})$.
\end{remark}
\begin{proof}
Since all frequency modulation parameters $b_k$ are taken from a closed interval in $\mathbb{R}^+$, the invertibility of $G^{g,g}_0$ is straightforward and  the windows $\ga_k$ are well defined.
Moreover, since $\mathcal{G}(\mathbf{g},\mathbf{b})$ is a Bessel sequence, so
is $\mathcal{G}(\boldsymbol\gamma,\mathbf{b})$, with Bessel bound
$(\essinf G^{g,g}_0)^{-2}B$. Substituting $\ga_k = (G^{g,g}_0)^{-1} g_k$ for
$\ga_k$ in the proof of Lemma~\ref{lem:duality_condition}, the first term in \eqref{eq:lemma} vanishes and we obtain
$(i)$:
\begin{align}\label{eq:spd_duals}
\abs{\inner{f-S_{g,\ga}f}{f}} & \leq \Big |\innerBig{(G^{g,g}_0)^{-1}\sum_{l\in\Z\setminus \{0\}} \sum_{k\in\Z} b_k^{-1} g_k(\cdot)\overline{g_k(\cdot-lb_k^{-1}}) f(\cdot - lb_k^{-1})}{f}\Big |\nonumber \\
& \leq (\essinf G^{g,g}_0)^{-1} \,  \innerBig{\sum_{l\in\Z\setminus \{0\}} \sum_{k\in\Z} b_k^{-1} \abs{g_k(\cdot)}\abs{g_k(\cdot-lb_k^{-1})} \abs{f(\cdot - lb_k^{-1})}}{\abs{f}}\nonumber \\
& \leq (\essinf G^{g,g}_0)^{-1} \, R_{g,g} \norm{f}_2^2 \,.
\end{align} 
 $(ii)$ follows directly from Definition~\ref{Def:AppDual}. Finally,  $(iii)$ follows from  \cite[Theorem~$3.4$]{doma12}, where it is shown that the assumptions \eqref{eq:g_k} on the windows $g_k$
guarantee the existence of  a sequence  $b^0_k$ such that 
\begin{equation}
\inner{S_{g,g} f}{f} \, \norm{f}_2^{-2} \geq 
\essinf G^{g,g}_0 - R_{g,g} > 0\,.
\end{equation}

\end{proof}

Single preconditioning dual windows are a good choice for reconstruction, whenever the frame operator is close to diagonal. This is the case,  if the original windows $g_k$
 decay fast and frequency sampling is fast. \\
 If the frame of interest
is close to some other frame, which, ideally, is better understood,  other  approximate
dual windows may be derived from this frame. The prototypical situation is a NSG frame which is close to a painless NSG frame in the sense of a small perturbation.  Approximately dual frames in the context of perturbation theory were recently studied in \cite{chla10}. In the following proposition
we give error estimates  for the  reconstruction with 
approximately dual frames in such a situation. This provides different reconstruction methods  apart from  single  preconditioning which was addressed in
Proposition~\ref{prop:approx_dual}.

\begin{proposition}\label{prop:approx_dual_perturbation}
Assume that $\mathcal{G}(\mathbf{g},\mathbf{b})$ is a Bessel sequence with bound $B$ and
that $\mathcal{G}(\mathbf{h},\mathbf{b})$ is a NSG frame
with lower and upper frame bound $A_h$ and $B_h$, respectively. We set $\psi_k = h_k - g_k$ and define the following windows:
\begin{align}
(a)\,\ga_{k,l}^1 =& S_{h,h}^{-1}h_{k,l}\,\,  \mbox{ (canonical dual of }h_{k,l}) \\
(b)\,\ga_{k,l}^2 = &S_{h,h}^{-1} g_{k,l}
\end{align}
Then the following hold:
\begin{itemize}
\item[(i)] 
\begin{equation}\norm{I-S_{g,\ga^1}} \leq A_h^{-1/2} \norm{C_\psi}\,.
\label{eq:error_1p}
\end{equation}
If 
$\esssup \sum_{l\in\Z} G^{\psi,\psi}_l < A_h\,, 
$
then
$\mathcal{G}({\boldsymbol\ga}^1,\mathbf{b})$ and
$\mathcal{G}(\mathbf{g},\mathbf{b})$ are approximately dual frames.
\item[(ii)] 
\begin{equation}\label{eq:error_2p}
\norm{I-S_{g,\ga^2}} \leq A_h^{-1} (\sqrt{B_h}+ \sqrt{B}) \norm{C_\psi} .
\end{equation}
If 
$\esssup \sum_{l\in\Z} G^{\psi,\psi}_l <\frac{A_h^2}{(\sqrt{B_h}+ \sqrt{B})^2}\,, 
$
then
$\mathcal{G}({\boldsymbol\ga}^2,\mathbf{b})$ and
$\mathcal{G}(\mathbf{g},\mathbf{b})$ are approximately dual frames.
\end{itemize}
\end{proposition}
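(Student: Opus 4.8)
The plan is to treat both parts by writing the mixed frame operator $S_{g,\ga^i}$ as a perturbation of the identity through the known duality of $\mathcal{G}(\mathbf{h},\mathbf{b})$, and then to control the perturbation using the operator norm $\norm{C_\psi}$ of the analysis operator of the difference windows $\psi_k = h_k - g_k$. Throughout I would use the operator-theoretic identities already noted after Definition~\ref{Def:AppDual}, in particular $S_{g,\ga} = C_\ga^* C_g$ (so that $\norm{S_{a,b}} \le \norm{C_a}\,\norm{C_b}$ and $\norm{C_a}^2$ equals the Bessel bound of $\mathcal{G}(\mathbf{a},\mathbf{b})$), together with the fact that $C_\psi = C_h - C_g$ and hence $\norm{C_\psi}$ is finite because both $\mathcal{G}(\mathbf{h},\mathbf{b})$ and $\mathcal{G}(\mathbf{g},\mathbf{b})$ are Bessel. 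I would also record that, because $\mathcal{G}(\mathbf{h},\mathbf{b})$ is a frame with bounds $A_h,B_h$, we have $S_{h,h}^{-1} = U_h C_h)^{-1}$ bounded with $\norm{S_{h,h}^{-1}} \le A_h^{-1}$, and that $C_{S_{h,h}^{-1}h} = C_h S_{h,h}^{-1}$ is the analysis operator of the canonical dual, with Bessel bound $A_h^{-1}$, so $\norm{C_{\ga^1}} \le A_h^{-1/2}$.

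For part (i): since $\ga^1_{k,l} = S_{h,h}^{-1} h_{k,l}$ is the canonical dual of $h_{k,l}$, we have $S_{h,\ga^1} = U_{\ga^1} C_h = I$. Therefore $I - S_{g,\ga^1} = S_{h,\ga^1} - S_{g,\ga^1} = U_{\ga^1}(C_h - C_g) = U_{\ga^1} C_\psi = C_{\ga^1}^* C_\psi$. Taking norms and using $\norm{C_{\ga^1}} \le A_h^{-1/2}$ gives $\norm{I-S_{g,\ga^1}} \le A_h^{-1/2}\norm{C_\psi}$, which is \eqref{eq:error_1p}. For the sufficient condition for approximate duality, I would bound $\norm{C_\psi}^2$ by the Bessel bound of $\mathcal{G}(\boldsymbol\psi,\mathbf{b})$; by the computation in the proof of Proposition~\ref{prop:Bessel condition} (or directly from \eqref{eq:norm_1} applied with $\bg = \bga = \boldsymbol\psi$) this Bessel bound is at most $\esssup \sum_{l\in\Z} G^{\psi,\psi}_l$. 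Hence $\esssup \sum_{l\in\Z} G^{\psi,\psi}_l < A_h$ forces $A_h^{-1/2}\norm{C_\psi} < 1$, and Definition~\ref{Def:AppDual} applies.

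For part (ii): here $\ga^2_{k,l} = S_{h,h}^{-1} g_{k,l}$, so $S_{g,\ga^2} = U_{\ga^2} C_g = S_{h,h}^{-1} U_g C_g = S_{h,h}^{-1} S_{g,g}$. I would write $I - S_{g,\ga^2} = S_{h,h}^{-1}(S_{h,h} - S_{g,g})$ and then split $S_{h,h} - S_{g,g} = (S_{h,h} - S_{g,h}) + (S_{g,h} - S_{g,g}) = U_\psi C_h + U_g C_\psi = C_\psi^* C_h + C_g^* C_\psi$ (here using $S_{h,h}-S_{g,h} = U_h C_h - U_h C_g = U_h C_\psi$... — more carefully, $S_{h,h}-S_{g,g} = U_h C_h - U_g C_g = U_h C_h - U_g C_h + U_g C_h - U_g C_g = U_\psi C_h + U_g C_\psi$). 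Taking norms, $\norm{S_{h,h}-S_{g,g}} \le \norm{U_\psi}\,\norm{C_h} + \norm{U_g}\,\norm{C_\psi} = \norm{C_\psi}(\sqrt{B_h} + \sqrt{B})$, using $\norm{U_\psi} = \norm{C_\psi}$, $\norm{C_h} = \norm{U_h} \le \sqrt{B_h}$, $\norm{C_g} = \norm{U_g} \le \sqrt{B}$. Combined with $\norm{S_{h,h}^{-1}} \le A_h^{-1}$ this yields \eqref{eq:error_2p}. The sufficient condition follows exactly as in part (i), bounding $\norm{C_\psi} \le (\esssup \sum_l G^{\psi,\psi}_l)^{1/2}$ and substituting the stated hypothesis.

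The only slightly delicate point — the "main obstacle" such as it is — is the bookkeeping of which analysis/synthesis operators and which frame or Bessel bounds enter each product, and in particular justifying $\norm{C_{\ga^1}}^2 \le A_h^{-1}$ for the canonical dual of $\mathcal{G}(\mathbf{h},\mathbf{b})$; this is a standard frame-theory fact (the canonical dual of a frame with lower bound $A_h$ is Bessel with bound $A_h^{-1}$), and I would cite it from a standard reference (e.g.~\cite{chsh94,yo01}) rather than reprove it. Everything else is a direct algebraic manipulation of the operator identities already available in the excerpt, so no genuinely new estimate is needed beyond the Bessel bound $\esssup \sum_l G^{\psi,\psi}_l$ for $\mathcal{G}(\boldsymbol\psi,\mathbf{b})$, which is precisely the quantity appearing in the hypotheses.
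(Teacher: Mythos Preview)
Your proof is correct and follows essentially the same route as the paper: for (i) you use $I - S_{g,\ga^1} = U_{\ga^1}(C_h - C_g) = U_{\ga^1} C_\psi$ together with the canonical-dual Bessel bound $\norm{U_{\ga^1}} \le A_h^{-1/2}$, and for (ii) you write $I - S_{g,\ga^2} = S_{h,h}^{-1}(S_{h,h} - S_{g,g})$ and split $S_{h,h} - S_{g,g} = U_\psi C_h + U_g C_\psi$, exactly as the paper does. The sufficient conditions are handled identically via $\norm{C_\psi}^2 \le \esssup \sum_{l} G^{\psi,\psi}_l$ from \eqref{eq:norm_1}.
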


\begin{remark}
The first statement of Proposition~\ref{prop:approx_dual_perturbation} is contained in  \cite{chla10}. \\
According to \cite{doma12}, the assumption that $\esssup \sum_{l\in\Z} G^{\psi,\psi}_l <A_h$ can be satisfied if the functions $\psi_k$
decay polynomially, i.e., 
$\abs{\psi_k(t)} \leq C_k (1 + \abs{t})^{-p_k}$
with appropriate constants $C_k$ and  decay rates $p_k>1$ .
\end{remark}

\begin{proof}
From  \eqref{eq:norm_1} it follows that $\norm{C_\psi}^2
\leq \esssup \sum_{l\in\Z}G_l^{\psi,\psi}$. The same estimate holds for
$\norm{U_\psi}^2$. \\
Since $\mathcal{G}(\mathbf{h},\mathbf{b})$ is a frame with canonical dual
frame $\mathcal{G}(\bga^1,\mathbf{b})$, an upper frame bound of $\mathcal{G}(\bga^1,\mathbf{b})$ is given by  $A_h^{-1}$ . We thus obtain (i) as follows:
\begin{equation}
\norm{I - S_{g,\ga^1}} = \norm{U_{\ga^1} C_h -
U_{\ga^1} C_g} \leq \norm{U_{\ga^1}} \norm{C_{\psi}}
\leq A_h^{-1/2} \norm{C_{\psi}}\,.
\end{equation}
If $\esssup \sum_{l\in\Z} G^{\psi,\psi}_l <A_h$, then $\norm{I -
S_{g,\ga^1}} < 1$
 and
$\mathcal{G}(\bga^1,\mathbf{b})$ and
$\mathcal{G}(\mathbf{g},\mathbf{b})$ are approximately dual frames as claimed.

To show $ (ii)$, we 
note that $U_{\ga^2} = S^{-1}_{h,h} U_g$ and thus
\begin{align}
\norm{I-S_{g,\ga^2}} =& \norm{S^{-1}_{h,h}S_{h,h}-S^{-1}_{h,h} S_{g,g}}  = \norm{S^{-1}_{h,h}(U_h C_h -U_g C_g)}\\
\leq&\norm{S^{-1}_{h,h}}\norm{U_h C_h -U_g C_h +U_g C_h-U_g C_g} = A_h^{-1} \norm{U_\psi C_h -U_g C_\psi}\\
\leq & A_h^{-1} \norm{C_\psi} (\sqrt{B_h}+\sqrt{B})
\end{align}and \eqref{eq:error_2p} follows.
\end{proof}

\subsection{Perturbation of painless nonstationary Gabor frames}
If a NSG system can be derived as a perturbation of a painless NSG frame, the approximately dual windows given in Proposition~\ref{prop:approx_dual_perturbation} are
particularly simple to compute. In this situation,  the frame
$\mathcal{G}(\mathbf{h},\mathbf{b})$
is the painless frame $\mathcal{G}(\mathbf{g^o},\mathbf{b})$ and the frame operator $S_{h,h} = S_{g^o,g^o}$
is the multiplication operator $G_0^{g^o,g^o}$. 
Moreover,  in this particular case, the approximately dual frames, given by
$\ga_{k,l}^1 = (G_0^{g^o,g^o})^{-1}g^o_{k,l}$ and
$\ga_{k,l}^2 = (G_0^{g^o,g^o})^{-1} g_{k,l}$ 
are  NSG frames. This is a very important asset, since for NSG frames fast algorithms for analysis and reconstruction using FFT exist. 

In \cite{doma12} we constructed a special class of NSG frames,  arising from painless NSG frames, which we introduce next.
\begin{definition}[Almost painless NSG frames]
Let $\mathcal{G}(\mathbf{g},\mathbf{b})$ be a NSG system, assume that the 
windows $g_k$ are essentially bounded away from zero on the intervals $I_k =
[a_k-(2b_k)^{-1}, a_k+(2b_k)^{-1}]$ and set $g_k^o =
g_k\chi_{I_k}$. If $\mathcal{G}(\mathbf{g^o},\mathbf{b})$ is a (painless) 
frame for $\Lt(\R)$, then we call the system $\mathcal{G}(\mathbf{g},\mathbf{b})$
an {\it almost painless NSG} system (or frame).
\end{definition}

For almost painless NSG systems, the estimates given in Proposition~\ref{prop:approx_dual_perturbation} can be written more explicitly.
\begin{corollary}\label{cor:approx_dual_painless}
Assume that $\mathcal{G}(\mathbf{g},\mathbf{b})$ is an almost painless NSG system and let $A_0 = \essinf G_0^{g^o,g^o}$ to be the lower frame bound of the painless frame $\mathcal{G}(\mathbf{g^o},\mathbf{b})$ and $g_k^r = g_k-g_k\chi_{I_k}$. Then the following hold:
\begin{itemize}
\item[(i)] for $\ga_k^1 = (G_0^{g^o,g^o})^{-1}g^o_k$,  
\begin{equation}\label{eq:error_ga1}
\norm{I-S_{g,\ga^1}} \leq A_0^{-1} \sqrt{R_{g^r,g^o}\cdot R_{g^o,g^r}}
\end{equation}
\item[(ii)] for $\ga_k^2 = (G_0^{g^o,g^o})^{-1} g_k$
\begin{equation}\label{eq:error_ga2}
\norm{I-S_{g,\ga^2}} \leq A_0^{-1} \left (R_{g^r,g^o} + R_{g^o,g^r} + \esssup \sum_{l\in\Z} G_l^{g^r,g^r} \right )
\end{equation}
\end{itemize}

\end{corollary}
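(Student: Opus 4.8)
The plan is to specialize Proposition~\ref{prop:approx_dual_perturbation} to the almost painless setting, where $\mathbf{h}=\mathbf{g^o}$, $S_{h,h}=G_0^{g^o,g^o}$ is a multiplication operator, $A_h=A_0=\essinf G_0^{g^o,g^o}$, and the perturbation is $\psi_k = h_k-g_k = g_k^o-g_k = -g_k^r$. The whole job is then to rewrite the operator-norm bounds of Proposition~\ref{prop:approx_dual_perturbation} in terms of the correlation functions $R$ and the diagonal-free sums $\sum_{l}G_l^{\cdot,\cdot}$, exploiting that $S_{h,h}$ is diagonal so its inverse has norm exactly $A_0^{-1}$.

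For part (i): starting from $\norm{I-S_{g,\ga^1}}=\norm{U_{\ga^1}(C_h-C_g)}=\norm{U_{\ga^1}C_\psi}$, I would observe that $U_{\ga^1}=S_{h,h}^{-1}U_h=(G_0^{g^o,g^o})^{-1}U_{g^o}$. Rather than crudely bounding $\norm{U_{\ga^1}}\le A_h^{-1/2}$ and $\norm{C_\psi}$ separately (which only recovers the weaker perturbation estimate), I would instead go back to the bilinear form $\inner{(I-S_{g,\ga^1})f}{f}$. Since the ``diagonal'' term cancels exactly — $\psi_k$ contributes $\sum_k b_k^{-1}\overline{g_k^o}\,(G_0^{g^o,g^o})^{-1}g_k^o - \sum_k b_k^{-1}\overline{g_k}(G_0^{g^o,g^o})^{-1}g_k^o$, and because $g_k^o\overline{g_k}=g_k^o\overline{g_k^o}$ pointwise (as $g_k^o=g_k\chi_{I_k}$), the $l=0$ piece is $(G_0^{g^o,g^o})^{-1}(G_0^{g^o,g^o}-G_0^{g^o,g^o})=0$ — only the off-diagonal terms survive, with one factor $g^o$ and one factor $g^r$ (since again $g_k\chi_{I_k}=g_k^o$ and the translated factor $g_k(\cdot-lb_k^{-1})$ splits, but the relevant cross terms are $g^r$ against $g^o$). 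Then I apply exactly the Cauchy–Schwarz/Fubini chain from the proof of Lemma~\ref{lem:duality_condition} with the extra scalar $(G_0^{g^o,g^o})^{-1}$ pulled out as $A_0^{-1}$, landing on $A_0^{-1}\sqrt{R_{g^r,g^o}\cdot R_{g^o,g^r}}$.

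For part (ii): here $\ga_k^2=(G_0^{g^o,g^o})^{-1}g_k$, so the diagonal term of $I-S_{g,\ga^2}$ is $1-\sum_k b_k^{-1}\overline{g_k}(G_0^{g^o,g^o})^{-1}g_k = 1-(G_0^{g^o,g^o})^{-1}G_0^{g,g}$. Writing $g_k=g_k^o+g_k^r$ gives $G_0^{g,g}=G_0^{g^o,g^o}+G_0^{g^o,g^r}+G_0^{g^r,g^o}+G_0^{g^r,g^r}$, so the diagonal term equals $-(G_0^{g^o,g^o})^{-1}(G_0^{g^o,g^r}+G_0^{g^r,g^o}+G_0^{g^r,g^r})$, whose sup-norm is at most $A_0^{-1}(R_{g^r,g^o}+R_{g^o,g^r}+\esssup G_0^{g^r,g^r})$ — actually I should be slightly careful and fold the pure diagonal $G_0^{g^r,g^r}$ into $\esssup\sum_l G_l^{g^r,g^r}$, which dominates it. For the off-diagonal ($l\ne0$) part I expand both $g_k$ factors in $\overline{g_k(\cdot-lb_k^{-1})}\,(G_0^{g^o,g^o})^{-1}g_k(\cdot)$ into $g^o+g^r$ pieces; after pulling out $A_0^{-1}$ and running the Lemma~\ref{lem:duality_condition} estimate on each of the four cross-combinations, the cross terms $\sqrt{R_{g^o,g^r}R_{g^r,g^o}}$ and $\sqrt{R_{g^r,g^o}R_{g^o,g^r}}$ combine with $R_{g^o,g^o}$-type terms; using $R_{g^o,g^o}\le$ (something controlled by $A_0$ via the painless property) and $\sqrt{xy}\le\tfrac12(x+y)$ to collapse everything into $A_0^{-1}(R_{g^r,g^o}+R_{g^o,g^r}+\esssup\sum_l G_l^{g^r,g^r})$.

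The main obstacle is the bookkeeping in (ii): one must verify that the $g^o$-against-$g^o$ off-diagonal contributions either vanish or are absorbed. In fact the cleanest route is to note $S_{g,\ga^2}=(G_0^{g^o,g^o})^{-1}S_{g^o,g}$ and write $I-S_{g,\ga^2}=(G_0^{g^o,g^o})^{-1}(G_0^{g^o,g^o}-S_{g^o,g})$, then split $S_{g^o,g}=S_{g^o,g^o}+S_{g^o,g^r}$ so that $G_0^{g^o,g^o}-S_{g^o,g}=(G_0^{g^o,g^o}-S_{g^o,g^o})-S_{g^o,g^r}$; the first bracket has no diagonal part and contributes $\le R_{g^o,g^o}$-type off-diagonal terms which by the painless support condition $|\supp g_k^o|\le b_k^{-1}$ actually \emph{vanish} (translates by $lb_k^{-1}$, $l\ne0$, have disjoint support from $g_k^o$!), killing the $g^o$–$g^o$ off-diagonal entirely; the second term $S_{g^o,g^r}$ is handled by the Walnut/Cauchy–Schwarz estimate giving its diagonal $R_{g^o,g^r}$ (or $\esssup G_0^{g^o,g^r}$) plus off-diagonal $\sqrt{R_{g^o,g^r}R_{g^r,g^o}}\le\tfrac12(R_{g^o,g^r}+R_{g^r,g^o})$, and the remaining $\esssup\sum_l G_l^{g^r,g^r}$ accounts for the $g^r$ factor appearing against itself. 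This disjoint-support observation is the key simplification that makes the stated bound come out clean, and it is the step I expect to require the most care to state correctly.
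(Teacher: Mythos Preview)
Your plan for (i) is correct and coincides with the paper's proof: substitute $\gamma^1$ into Lemma~\ref{lem:duality_condition}, observe that the diagonal term vanishes because $\overline{g_k}\,g_k^o=|g_k^o|^2$, then in the off-diagonal part use that $g_k^o(\cdot-lb_k^{-1})g_k^o(\cdot)=0$ for $l\ne0$ so only $g_k^r(\cdot-lb_k^{-1})$ survives against $g_k^o(\cdot)$, and pull out $(\essinf G_0^{g^o,g^o})^{-1}=A_0^{-1}$.

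For (ii) your key insight (the painless support condition kills the $g^o$--$g^o$ off-diagonal) is exactly the paper's, but your ``cleanest route'' contains an algebraic slip that loses terms. The identity $S_{g,\gamma^2}=(G_0^{g^o,g^o})^{-1}S_{g^o,g}$ is wrong: since $\gamma_k^2=(G_0^{g^o,g^o})^{-1}g_k$ and the multiplication operator commutes with modulations, one has $U_{\gamma^2}=(G_0^{g^o,g^o})^{-1}U_g$, hence $S_{g,\gamma^2}=(G_0^{g^o,g^o})^{-1}S_{g,g}$. You must therefore split \emph{both} factors as $g_k=g_k^o+g_k^r$ in the Walnut representation, obtaining four pieces rather than the two you list; your two-term split drops $S_{g^r,g^o}$ and $S_{g^r,g^r}$ (the latter you try to recover by hand at the end, but it does not come from $S_{g^o,g}$). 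This four-way split is precisely what the paper does, and what your earlier paragraph had already started before you switched routes. After the correct split: the $g^o$--$g^o$ piece reduces to $G_0^{g^o,g^o}$ (its $l\ne0$ terms vanish), cancelling the identity; the $l=0$ contributions of the two cross pieces vanish because $g_k^o$ and $g_k^r$ have disjoint supports; each cross piece is then bounded via \eqref{eq:estimate_R} by $\sqrt{R_{g^o,g^r}R_{g^r,g^o}}$, and the two together give $R_{g^o,g^r}+R_{g^r,g^o}$ by AM--GM; finally $\|S_{g^r,g^r}\|\le\esssup\sum_l G_l^{g^r,g^r}$ by \eqref{eq:norm_1}. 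With this correction your argument is the paper's.
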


\begin{proof}
The estimates follow from Lemma~\ref{lem:duality_condition}. First, substituting $\ga_k^1 = (G^{g^o,g^o}_0)^{-1} g^o_k$ for
$\ga_k$ in \eqref{eq:duality_estimate}, the first term vanishes, since $\sum_{k\in\Z} b_k^{-1} \overline{g_k} g_k^o = G^{g^o,g^o }_0$, and we obtain 
\begin{equation*}
\norm{I-S_{g,\ga^1}} \leq \sqrt{R_{g,\ga^1}\cdot R_{\ga^1,g}} \leq (\essinf G^{g^o,g^o}_0)^{-1} \,  \sqrt{R_{g^r,g^o}\cdot R_{g^o,g^r}}\,,
\end{equation*}
since
\begin{align}\label{eq:Rrr}
R_{g,\ga^1} &= \esssup \sum_{l\in\Z\setminus \{ 0\}} \sum_{k\in\Z} b_k^{-1} \abs{g_k(\cdot - lb_k^{-1})} \abs{(G^{g^o,g^o}_0)^{-1} g^o_k(\cdot)} \\\nonumber 
&\leq (\essinf G^{g^o,g^o}_0)^{-1} \cdot \esssup \sum_{l\in\Z\setminus \{ 0\}} \sum_{k\in\Z} b_k^{-1} \abs{g^r_k(\cdot - lb_k^{-1})} \abs{g^o_k(\cdot)} \\\nonumber  
&= (\essinf G^{g^o,g^o}_0)^{-1} \, R_{g^r,g^o}\,,
\end{align}
similarly for $R_{\ga^1,g}$.

For $(ii)$ we substitute $\ga_k^2 = (G_0^{g^o,g^o})^{-1} g_k$ for $\ga$ in the proof of Lemma~\ref{lem:duality_condition}, $g^o_k +g^r_k$ for $g_k$ and use the fact that $g_k^o$ and $g_k^r$ have disjoint supports. Then, since $\overline{g^o_k(\cdot - lb_k^{-1})}g^o_k(\cdot)$ is zero for $l\neq 0$, using \eqref{eq:estimate_R} we obtain that
\begin{align*}
\abs{\inner{f-S_{g,\ga^2}f}{f}} &= \absBig{\innerBig{f - \sum_{k\in\Z} \sum_{l\in\Z} \overline{g_k(\cdot - lb_k^{-1})} (G^{g^o,g^o}_0)^{-1} g_k(\cdot ) f(\cdot - lb_k^{-1})}{f}} \\ \nonumber
&\leq \absBig{\innerBig{f - (G^{g^o,g^o}_0)^{-1} \sum_{k\in\Z} b_k^{-1} \abs{g_k^o}^2 f}{f}} \\\nonumber 
&+ (\essinf (G^{g^o,g^o}_0)^{-1} \left ( R_{g^r,g^o} + R_{g^o,g^r} + \norm{S_{g^r,g^r}} \right ) \norm{f}_2^2\,.
\end{align*}
The first term vanishes and by \eqref{eq:norm_1}, $\norm{S_{g^r,g^r}} \leq \esssup \sum_{l\in\Z} G_l^{g^r,g^r}$.
\end{proof}

\section{Examples}

We present  two examples to illustrate our theory. In the first example we  deal with
almost painless frames using Gaussian windows. We consider three different approximately dual frames and check their performance in terms of reconstruction.

In both examples,  we  consider a basic window and dilations by $2$ and
$\frac{1}{2}$, respectively. Since the dilation parameters take only three
different values, there are three kinds of windows,  with support size $1/2$,
$1$ and $2$, respectively. Note that, while theoretically possible, sudden
changes in the shape and width of adjacent windows turn out to be undesirable
for applications, hence we only allow for stepwise change in dilation
parameters.
	
\begin{example}\label{Ex1}
Let $s_k \in \{-1,0,1\}$ with $\abs{s_k-s_{k-1}} \in \{0,1\}$ for all $k\in\Z$.
We consider a sequence of windows $g_k$ that are translated and dilated versions
of the Gaussian window $g(t) = e^{-\pi[\sigma t]^2}$: $g_k(t) =
T_{a_k}\sqrt{b_k}g(b_k t) = \sqrt{b_k}g(b_k (t-a_k))$, with $\sigma=2.5$,  $b_k=2^{s_k}$, $a_0=0$
and for all $k\in\Z$ 
\begin{align*}
a_{k+1} &= a_k + (2b_k)^{-1} \,\quad \,\,\mbox{if} \quad s_k=s_{k+1}\,,\\
a_{k+1} &= a_k + (3b_{k+1})^{-1} \quad \mbox{if} \quad
s_k>s_{k+1}\,,\\
a_{k+1} &= a_k + (3b_k)^{-1} \,\,\, \quad \,\mbox{if} \quad
\,\, s_k<s_{k+1}\,.
\end{align*}
Here, $b_L = 1/2$,  $b_U =2$ and the $\{a_k\, : \, k\in\Z\}$ are separated with minimum distance  $\delta =
1/4$. 
We arrange the windows as follows: after each change of window size, no change
is allowed in the next step; in other words, each window has at least one
neighbor of the same size. 

Let $I_k=[a_k-(2b_k)^{-1},a_k + (2b_k)^{-1}]$ and define a new set of windows
by $g^o_k(t) = g_k(t)\chi_{I_k}$. Then $\{M_{lb_k} g^o_k\, : \, k,l\in\Z\}$ is a
painless nonstationary Gabor frame with lower frame bound $A_0 = 0.1718$.

The system $\mathcal{G}(\mathbf{g},\mathbf{b})$ arises from the painless
frame $\mathcal{G}(\mathbf{g^o},\mathbf{b})$, and therefore we are interested in the approximate dual windows proposed in 
Corollary~\ref{cor:approx_dual_painless}. We first consider
$\ga_k^1 = (G_0^{g^o,g^o})^{-1} g_k^o$, the canonical dual frame of
$\mathcal{G}(\mathbf{g^o},\mathbf{b})$. According to \eqref{eq:error_ga1},  we 
need to calculate $R_{g^r,g^o}$ and $R_{g^o,g^r}$ in order to obtain an  estimate of the reconstruction error.\\
\emph{Claim~1:} $R_{g^r,g^o} \leq 0.00827 + \mathcal{O}(10^{-6})$.\\
For fixed $k\in\Z$, 
\begin{equation*}
b_k^{-1}\abs{g_k^o(t)}\sum_{l\in\Z\setminus\{0\}} \abs{g^r_k(t -
lb_k^{-1})} = b_k^{-1/2}\abs{g_k^o(t)}\sum_{l\in\Z} b_k^{-1/2}\abs{g^r_k(t -
lb_k^{-1})}
\end{equation*}
since $g^r_k$ and $g_k^o$ have disjoint support. 
Then, due to  $b_k^{-1}$-periodicity of $\sum_{l\in\Z} b_k^{-1/2}\abs{g^r_k(t -
lb_k^{-1})}$, we have
\begin{align}
R_{g^r,g^o} &= \esssup \sum_{k\in\Z} b_k^{-1/2} \abs{g_k^o(\cdot)}
\sum_{l\in\Z} b_k^{-1/2} \abs{g_k^r(\cdot - lb_k^{-1})} \nonumber \\
&\leq \esssup \sum_{k\in\Z} b_k^{-1/2} \abs{g_k^o(\cdot)} \cdot 
\esssup_{t\in\I_k} \sum_{l\in\Z} b_k^{-1/2}\abs{g^r_k(t -lb_k^{-1})}\label{Eq:Rgg_est}
\end{align}
In order to obtain  more accurate estimates, we split $I_k$ by setting $I_k^+ = [a_k,a_k+(2b_k)^{-1}]$ and $I_k^- = [a_k-(2b_k)^{-1},a_k]$ and estimate  expression \eqref{Eq:Rgg_est} by  its values  at the end points of $I_k^+$ or  $I_k^-$, respectively. We observe that, for $t\in I_k^+$:
\begin{align*}
\sum_{l\in\Z} b_k^{-1/2}\abs{g^r_k(t - lb_k^{-1})} &=
\sum_{l=1}^{\infty} \Big [ e^{-\pi \left [\sigma
b_k(t-a_k-lb_k^{-1})\right ]^2} +
e^{-\pi \left [\sigma b_k(t-a_k+lb_k^{-1}) \right ]^2} \Big ] \\ \nonumber 
& \leq  \sum_{l=1}^{\infty} e^{-\pi \left
[\sigma l\right ]^2} +\sum_{l=1}^{\infty}
e^{-\pi \left [\sigma (2l-1)/2 \right ]^2} \leq 0.00738 + \mathcal{O}(10^{-6})\,.
\end{align*}
By the symmetry of $g_k^r$ with respect to $a_k$, an analogous estimate holds for $t\in I_k^-$. Furthermore, due to the arrangement of the windows
$g_k$, $ \esssup \sum_{k\in\Z} b_k^{-1/2} \abs{g_k^o(\cdot)} \leq 1.1206$ and \emph{Claim~1} follows.\\
\emph{Claim~2:} $R_{g^o,g^r}\leq 0.0157 + \mathcal{O}(10^{-6})$.\\

\begin{figure}[h]
\centerline{\includegraphics[width=\textwidth]{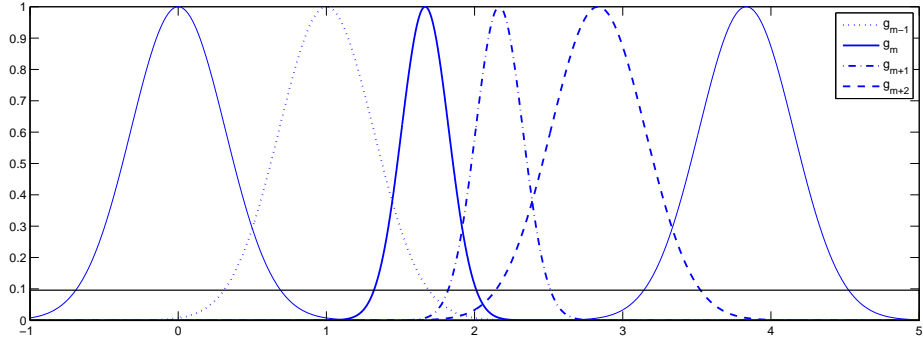}}
 	\caption{An example of an arrangement of windows $g_k$. }
\label{fig:windows}
\end{figure}

\begin{figure}[h]
\centerline{\includegraphics[width=\textwidth]{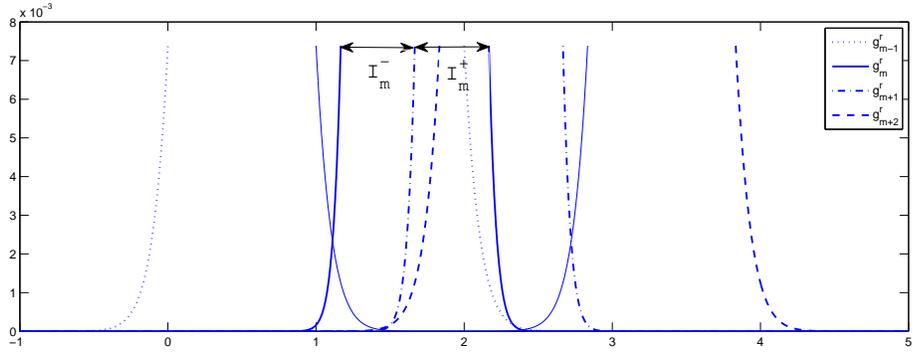}}
 	\caption{A zoom-in on the tails $g_k^r$ in the arrangement of Fig.~\ref{fig:windows}. }
	\label{fig:tails}
\end{figure}

Observe that $\sum_{l\in\Z\setminus \{ 0\}}
b_k^{-1/2}\abs{g_k^o(t - lb_k^{-1})}\leq \norm{b_k^{-1/2}g^o_k}_{\infty}=1$.
Then, with $I_m^+$ and $I_m^-$  defined as before:
\begin{align*}
R_{g^o,g^r} &= \esssup \sum_{k\in\Z} b_k^{-1/2} \abs{g_k^r(\cdot)}
\sum_{l\in\Z\setminus \{ 0\}} b_k^{-1/2} \abs{g_k^o(\cdot - lb_k^{-1})} \leq 
\esssup \sum_{k\in\Z} b_k^{-1/2} \abs{g_k^r(\cdot)} \\ \nonumber
& \leq \sup_{m\in\Z} \Big \{\esssup_{t\in I_{m}^+}
\sum_{k\in\Z} b_k^{-1/2} \abs{g_k^r(t)}\, , \, \esssup_{t\in I_{m}^-} \sum_{k\in\Z}
b_k^{-1/2} \abs{g_k^r(t)} \Big \} \\ \nonumber
& = \sup_{m\in\Z} \Big \{ \underbrace{\esssup_{t\in I_{m}^+}
\sum_{k\in\Z} e^{-\pi [\sigma b_k(t-a_k)]^2} \chi_{I_k^c}(t)}_{C^+}\,,\,
\underbrace{\esssup_{t\in I_{m}^-} \sum_{k\in\Z} e^{-\pi [\sigma
b_k(t-a_k)]^2} \chi_{I_k^c}(t)}_{C^-} \Big \}.
\end{align*}
We bound  $C^+$ and $C^-$ by their maximal values on $I_m^+$, respectively $I_m^-$. The set $\{a_k\, : \, k\in\Z\}$ is $\delta-$separated, 
hence $\abs{a_k - a_m} \geq \abs{k-m} \delta$. By the arrangement of the
windows, there are at most two windows $g_k^r$ 
which assume their  maximum  $e^{-\pi [\sigma/2 ]^2}$  in the interval $I_m^+$ or in $I_m^-$.
 Without loss of generality we assume that the two maximal values occur in  $I_m^+$,   corresponding to the  windows $g_{m-1}^r$ and $g_{m+2}^r$.
 Then,  $g_{m+1}^r$, $g_m^r$ are zero in $I_m^+$, cf.~Figure~\ref{fig:tails} for an example situation. Therefore,
\begin{align*}
C^+ &\leq \sum_{k > m + 2 } e^{-\pi [\sigma b_k(a_m + (2b_m)^{-1} -
a_k)]^2}  + 2e^{-\pi [\sigma b_k (2b_k)^{-1}]^2} + \sum_{k < m -1} e^{-\pi
[\sigma b_k(a_m - a_k)]^2} \\\nonumber
& \leq 2 \left ( e^{-\pi [\sigma/2 ]^2} +  \sum_{k>0;\, kb_k > 2}e^{-\pi [\sigma
b_k k \delta]^2} \right )\,.
\end{align*}
Since we assumed that two windows $g_k^r$ reached their maximum in $I_m^+$, $C^- \leq C^+$. As before, the bound from Claim~2 follows by  numerical calculations.\\

In summary, due to  \eqref{eq:error_ga1}, Claim~1 and~2 and the lower frame bound $A_0 = 0.1718$, the reconstruction error using the approximate duals $\ga_k^1$ is bounded  by
\begin{equation}
\norm{I - S_{g,\ga^1}} \leq 0.0663 + \mathcal{O}(10^{-6})\,.
\end{equation}

Another choice of approximate dual system are the windows $\ga_k^2 = (G_0^{g^o,g^o})^{-1} g_k$. 
In this setting we use \eqref{eq:error_ga2}  to derive 
\begin{equation*}
\norm{I - S_{g,\ga^2}} \leq A_0^{-1} \left (R_{g^r,g^o} + R_{g^o,g^r} + \esssup
\sum_{l\in\Z} G_l^{g^r,g^r} \right ) \leq 0.1402 + \mathcal{O}(10^{-6})\,,
\end{equation*}
since, by  previous calculations, 
\begin{align}\label{eq:estimate_Ggrgr}
\esssup \sum_{l \in\Z} G_l^{g^r,g^r} &= \esssup \sum_{k\in\Z} b_k^{-1/2}
\abs{g_k^r(t)} \sum_{l\in \Z} b_k^{-1/2}\abs{g_k^r(t-lb_k^{-1})} \\\nonumber 
&\leq \esssup \sum_{k\in\Z} b_k^{-1/2} \abs{g_k^r(t)} \, \cdot \, 
\esssup \sum_{l\in \Z} b_k^{-1/2}\abs{g_k^r(t-lb_k^{-1})} \\\nonumber
&\leq 0.0001158 + \mathcal{O}(10^{-6})\,.
\end{align}

As a third choice of approximate dual windows we consider single preconditioning windows 
$\ga_k=(G_0^{g,g})^{-1}g_k$ introduced in Proposition~\ref{prop:approx_dual}. It can 
easily be seen that
\begin{equation}
R_{g,g}  \leq R_{g^o,g^r} + R_{g^r,g^o} + R_{g^r,g^r} \leq  R_{g^o,g^r} +
R_{g^r,g^o} + \esssup \sum_{l\in\Z} G_l^{g^r,g^r}\,,
\end{equation}
and, by previous calculations, it follows
\begin{equation}
R_{g,g} \leq 0.0241 + \mathcal{O}(10^{-6}) < A_0 \leq \essinf G_0^{g,g}\,.
\end{equation}
Therefore, by Proposition~\ref{prop:approx_dual} (ii),
$\G(\mathbf{g},\mathbf{b})$ and $\G(\mathbf{\ga},\mathbf{b})$ are approximately
dual frames. Moreover
\begin{equation}
\norm{I-S_{g,\ga}} \leq  \frac{R_{g,g}}{\essinf G_0^{g,g}} \leq
\frac{R_{g,g}}{A_0} \leq 0.1402 + \mathcal{O}(10^{-6})\,.  
\end{equation}
\end{example}
\begin{remark}
Observe that from each of the approximately dual frame estimates given in the above example, the frame property of  $\G(\mathbf{g},\mathbf{b})$ follows.\\
Note that, from \eqref{eq:estimate_Ggrgr}, the frame property of
$\G(\mathbf{g},\mathbf{b})$ may also be derived by applying results from perturbation theory~cf.~\cite{Ch98}. Indeed, since $\sum_{k,l\in\Z} \abs{\inner{f}{g_{k,l} - g^o_{k,l}}}^2 =
\sum_{k,l\in\Z} \abs{\inner{f}{g^r_{k,l}}}^2 \leq \esssup \sum_{l \in\Z}
G_l^{g^r,g^r} \norm{f}_2^2 < A_0 \norm{f}_2^2$, it
follows that $\G(\mathbf{g},\mathbf{b})$ is a frame with a lower frame
bound $A = 0.1630$.
\end{remark}
\begin{example}\label{Ex2}
In our second example, we turn to the situation mentioned in the introduction, namely, the construction of non-uniform filter banks with compactly supported, that is, FIR filters, via NSG frames. In this situation, the frame operator $S$ does not have a Walnut-like structure as given in \eqref{eq:walnut} on the time side. However, $S$  may be considered on the frequency side by applying a Fourier transform. Then, we encounter the same structure as before and may exploit the developed techniques to deduce the frame property and to construct approximate dual frames for reconstruction. The situation is schematically depicted in Figure~\ref{Fig:Win_schem}. 

\begin{figure}[h]
\centerline{\includegraphics[width=14cm,height=8cm]{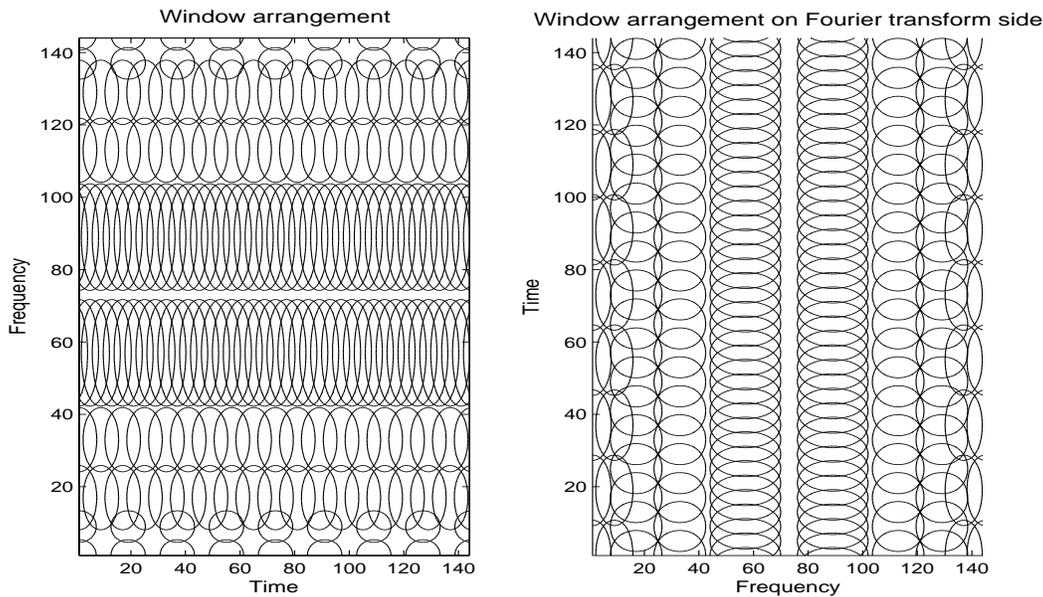}}
 	\caption{An example for the arrangement of dilated windows in Example 2}
	\label{Fig:Win_schem}. 
\end{figure}
It is obvious that, while, in the time domain, various windows $g_k$ with different pass-bands, are shifted to obtain the overall system, applying a Fourier transform yields the known situation: various windows $\hat{g}_k$ are modulated to create an NSG system. \\
On the other hand, we are interested in using FIR filters, that is, the windows $g_k$ are all compactly supported, hence not bandlimited. Similar to the construction in Example~\ref{Ex1}, we can cut the windows 
$\hat{g}_k$ to obtain a painless NSG reference frame.\\
More precisely, we consider the family of windows $g_k$ and a vector of corresponding time-shift parameters $a_k$. Then, we set $g_{k,l}  = T_{a_k l} g_k$ and are interested in the frame property of the set of functions $\{g_{k,l}:\; k,l\in\mathbb{Z}\}$. Considering, due to the lack of structure on the time side as mentioned above,  the corresponding frame operator   on the frequency side corresponds to investigating the operator $\mathcal{F} S \mathcal{F}^\ast$, which acts on the Fourier transform of a signal of interest. In other words, we are now dealing with the set of functions $\{\mathcal{F}(g_{k,l}) = M_{a_k l} \hat{g}_k:\; k,l\in\mathbb{Z}\}$. \\
For the current example, we consider Hanning windows $h_k$ and, as in the previous example, apply dilations by $2^{-1}$ and $2$, respectively, to obtain various time- and frequency resolutions. The time-shift parameters $a_k$ are chosen in parallel to the choice of the frequency-shift parameters $b_k$ in Example~\ref{Ex1}.  
 
Given the explicit knowledge of the spectral properties of the Hanning windows, explicit error estimates can be derived in a similar manner as in the previous example. 
Here, we also numerically calculate the errors resulting from reconstruction by means of  the three different proposed approximate dual systems. We use the same nomenclature as before, that is, $\gamma^1_k$, $\gamma^2_k$ and $\gamma^3_k$ denote the canonical duals of the painless frame, the set of windows $(G_0^{g^o,g^o})^{-1} g_k$ and the single preconditioning windows, respectively. Then
\begin{enumerate}
\item $\norm{I - S_{g,\ga^1}} = 0.0210$
\item $\norm{I - S_{g,\ga^2}} = 0.0407$
\item $\norm{I - S_{g,\ga^3}} =  0.0407$
\end{enumerate}
As before, the canonical duals of the painless frame provide the best approximate reconstruction.\\
The set of windows $\hat{g}_k$ used in this example, together with their true, single preconditioning duals $\gamma^3_k$ and the canonical duals of the corresponding painless frame, $\gamma^1_k$, are depicted in Figure~\ref{Fig:Dual1}. On the right plots, zoom-ins are shown to better compare the detailed behavior. \\
\begin{figure}[h]
\centerline{\includegraphics[width=14cm,height=8.2in]{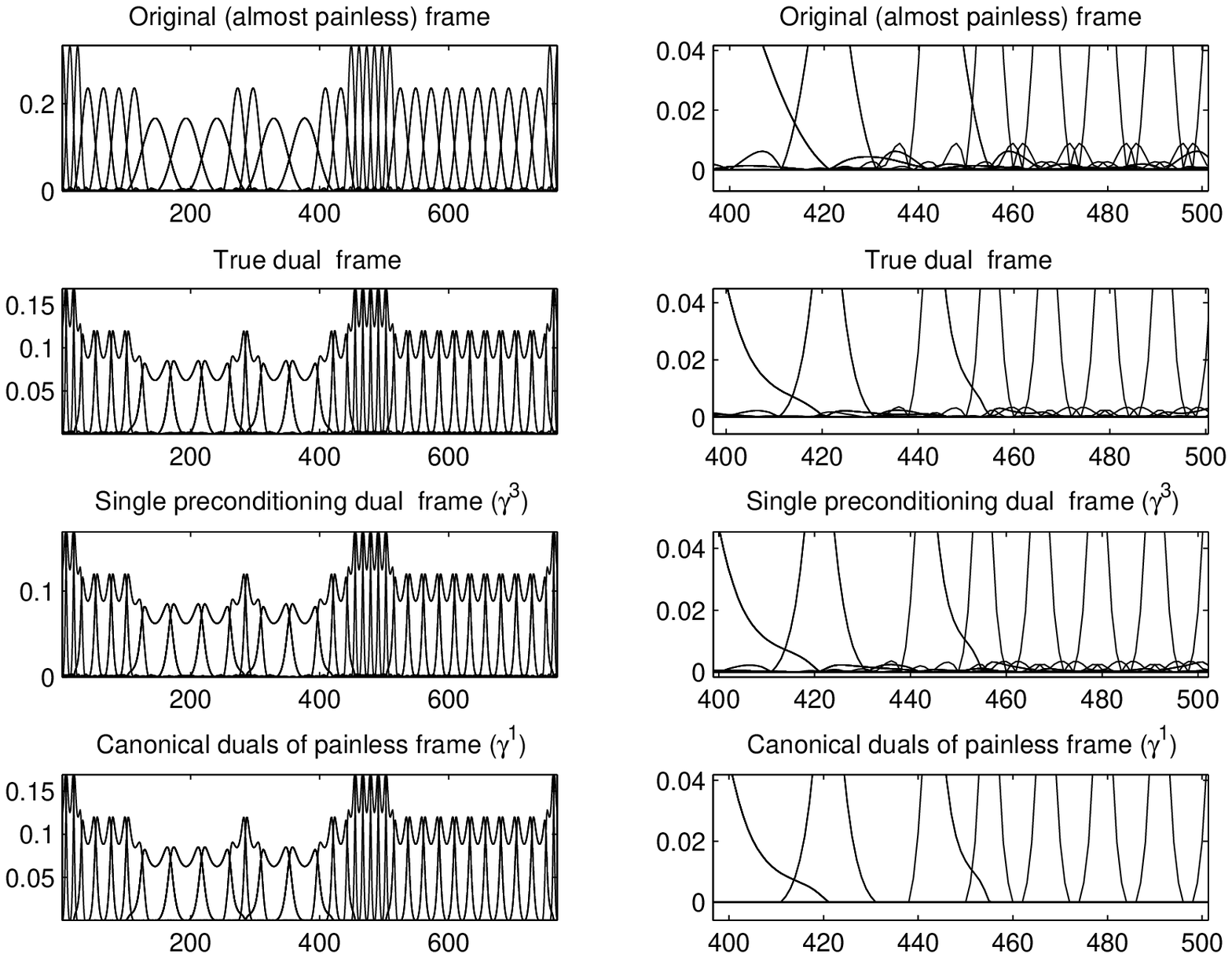}}
 	\caption{The original windows and the windows from  various (approximate) dual frames.}
	\label{Fig:Dual1}. 
\end{figure}
A comparison between a basic window from the true dual frame and the preconditioning duals $\gamma^3_k$ and  the true dual and the approximate dual  $\gamma^1_k$  is depicted in Figures~\ref{Fig:True_pre} and~\ref{Fig:True_can}, respectively, in both the frequency domain (upper plot) and the time domain (lower plot). It should be noted that, while compactly supported in frequency, $\gamma^1_k$  still have better decay in the time domain than both the true dual windows and the other approximate dual windows.
\begin{figure}[hbt]
	\begin{center}
	\includegraphics[width=14cm,height=8cm]{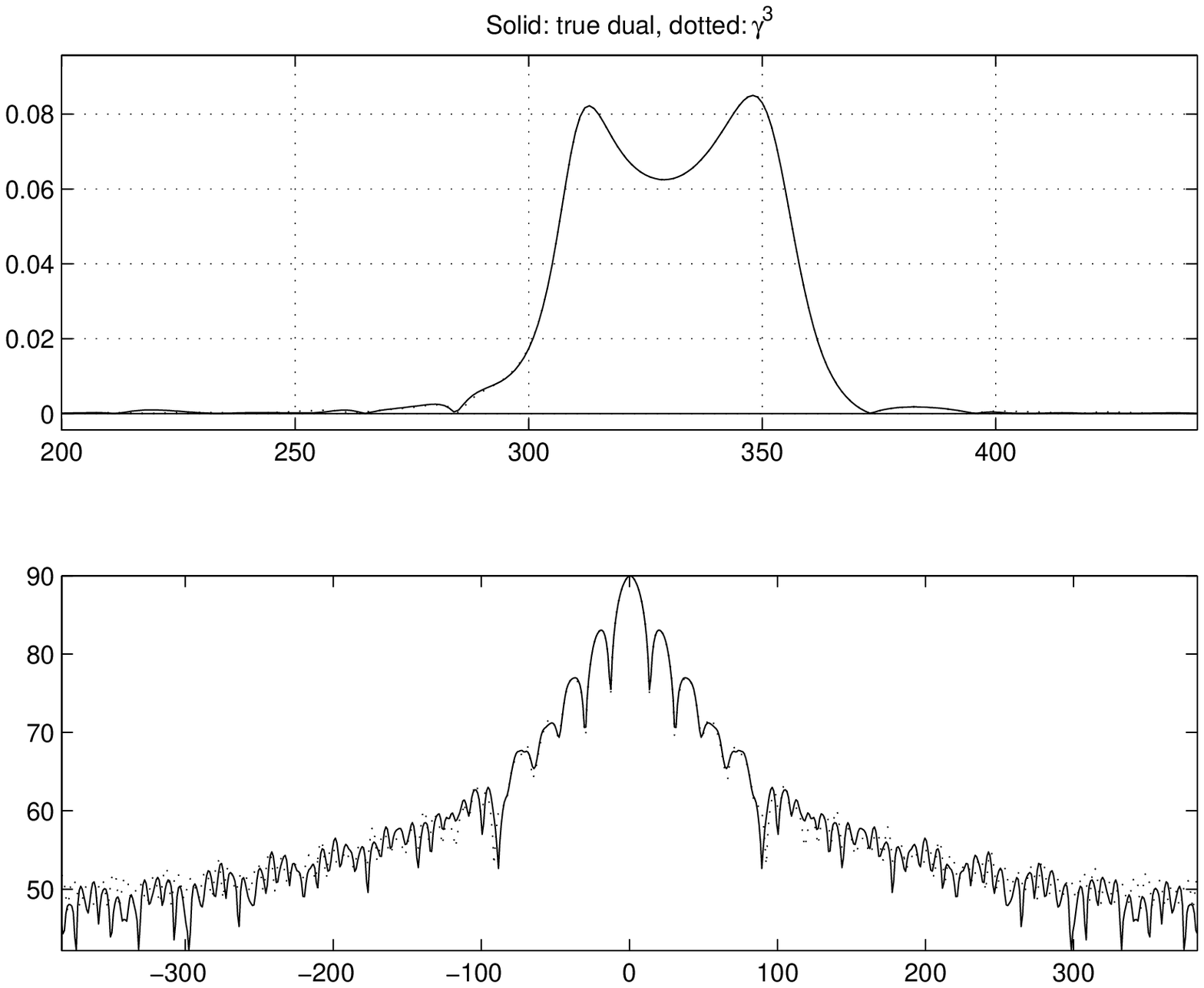}
	\end{center}\caption{Comparison between true dual and single preconditioning dual $\gamma^3$.}	\label{Fig:True_pre}
\end{figure}
 
 \begin{figure}[hbt]
	\begin{center}
		\includegraphics[width=14cm,height=8cm]{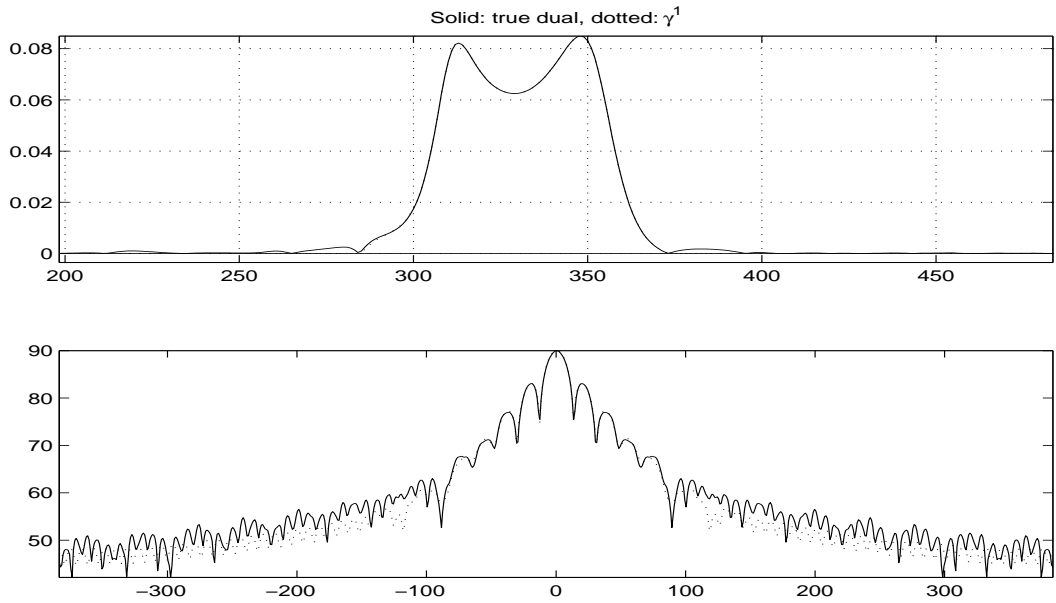}
	\end{center}\caption{Comparison between true dual and painless approximate  dual $\gamma^1$.}	\label{Fig:True_can}
\end{figure}
\end{example}



\end{document}